\newtheorem{theorem}{Theorem}
\newtheorem{prop}{Proposition}
\newtheorem{cor}{Corollary}
\newtheorem{deff}{Definition}
\begin{document} 

\title[On nonsymmetric  rank one singular perturbations $\ldots $]
    {On nonsymmetric  rank one singular perturbations  of selfadjoint operators}

\author{Mykola Dudkin}
\address{National Technical University of Ukraine "Kyiv Polytechnic Institute", 37 Prospect Pe\-re\-mo\-gy,
Kyiv, 03056, Ukraine}
\email{dudkin\@imath.kiev.ua}

\author{Tetiana Vdovenko}
\address{National Technical University of Ukraine "Kyiv Polytechnic Institute", 37 Prospect Pe\-re\-mo\-gy,
Kyiv, 03056, Ukraine}
\email{tanyavdovenko@meta.ua}
%
\subjclass[2000]{Primary: 47A10, 47A55, 47A75}
\date{22/09/2015; \ \  Revised 23/02/2016}
\keywords{Singular perturbation, nonsymmetric perturbations,
eigenvalue problem, M.~Krein's formula.}
\begin{abstract}
  We consider nonsymmetric rank one singular perturbations of a
  selfadjoint operator, i.e., an expression of the form $\tilde
  A=A+\alpha\left\langle\cdot,\omega_1\right\rangle\omega_2$,
  $\omega_1\not=\omega_2$, $\alpha\in{\mathbb C}$, in a general case
  $\omega_1,\omega_2\in{\mathcal H}_{-2}$.

  Using a constructive description of the perturbed operator $\tilde
  A$, we investigate some spectral and approximations properties of
  $\tilde A$. The wave operators corresponding to the couple $A$,
  $\tilde A$ and a series of examples are also presented.
\end{abstract}

\maketitle

\section{Introduction}

The theory of rank one (symmetric) singular perturbations of a self
adjoint operator has obtained much attention of physicists and
mathematicians.  Several papers and research monographs are devoted to
this theory (see, f.e., \cite{AGHKH,AKu1,Ko17} and references
therein).

The aim of the current paper is to investigate a generalization of the
singular (symmetric) perturbation theory to the case of nonsymmetric
perturbations of the form $\tilde
A=A+\alpha\left\langle\cdot,\omega_1\right\rangle\omega_2$, where
$A=A^*$ is a given selfadjoint operator perturbed by
$\alpha\left\langle\cdot,\omega_1\right\rangle\omega_2$ with vectors
$\omega_1\not=\omega_2$ that belong to the negative space ${\mathcal
  H}_{-2}$ from the $A$-scale (the scale is generated by the operator
$A$) and $\alpha\in{\mathbb C}$.

If $\omega_1=\omega_2$ and $\alpha\in{\mathbb R}$, then  we meet the classical case, i.e.,
the well known theory of a
(symmetric) singular perturbation of selfadjoint operators \cite{AGHKH,AKu1,Ko17}.

In this article we continue our investigations started in \cite{VD},
where we considered only the case $\omega_1,\omega_2\in{\mathcal
  H}_{-1}$.  A significant improvement of the previous studies is in
the consideration of perturbations by arbitrary vectors, i.e., vectors
from ${\mathcal H}_{-2}$, too.

Since the study of singularly perturbed operators is extended to 
perturbations by nonsymmetric potentials, we will also expect that the
spectral properties (in particular the point spectrum of $\tilde A$)
must be similar as in the classical case (including the unexpected for the perception the
so-called associated pair of eigenvalues).

The next important topic of this article is to clarify 
approximation properties of these operators.  In particular, we
investigate an approximation of classical (symmetric) perturbations
by nonsymmetric perturbations taking into account $H_{-1}$- and
$H_{-2}$-perturbations.

When investigating wave operators for the couple $A$ and a
nonsymmetrically perturbed $\tilde A$, we solve a number of problems,
including the following: whether $\tilde A$ is a spectral type
operator; whether $\tilde A$ is additionally of a scalar type one;
whether there exist wave operators for the couple $A$ and $\tilde A$;
and whether we can write explicit expressions for these wave operators
and, as a consequence, the scattering matrix.

In general, the idea and motivations for considering nonsymmetric
perturbations is not new.  Closely related investigations are in
\cite{L,MM}, the ones carried out from the point of view of
nonselfadjoint extensions in \cite{V}, and a non-local interactions
approach has been taken in \cite{N1,N2}.

\section{The main definitions and properties}

Let ${\mathcal H}$ be a separable Hilbert space with a scalar product
$(\cdot,\cdot)$ and the norm $\Vert\cdot\Vert=\sqrt{(\cdot,\cdot)}$.
We consider a selfadjoint semi-bounded operator $A=A^*$ defined on a
domain ${\rm Dom} A={\mathfrak D}(A)$ in ${\mathcal H}$.  The sets
$\sigma(\cdot)$, $\sigma_p(\cdot)$, $\sigma_{ac}(\cdot)$,
$\sigma_c(\cdot)$, $\rho(\cdot)$ denote the spectrum, the point
spectrum, the absolutely continuous spectrum, the continuous spectrum,
and the regular points of a corresponding operator, respectively.

An operator $A$ is associated with the $A$-scale of Hilbert spaces
\cite{AKu1}.  We consider only a part of the $A$-scale, namely,
\begin{equation}\label{f1}
{\mathcal H}_{-2}\supset{\mathcal H}_{-1}\supset{\mathcal H}\equiv{\mathcal H}_0
\supset{\mathcal H}_{+1}\supset{\mathcal H}_{+2}
\end{equation}
where ${\mathcal H}_{+1}:={\mathfrak D}(\vert A\vert^{1/2})$ and
${\mathcal H}_{+2}:={\mathfrak D}(A)$ are endowed
with the  norms
$\Vert\varphi\Vert_{k}=\Vert(\vert A\vert+I)^{k/2}\varphi\Vert$, $k=1,2$, $\varphi\in{\mathcal H}_k(A)$,
respectively  ($I$ stands for the identity);
and ${\mathcal H}_{-k}:={\mathcal H}_{-k}(A)$ is the negative (dual) space, i.e., the completion of
${\mathcal H}$ with respect to the norm
$\Vert f\Vert_{-k}=\Vert(\vert A\vert+I)^{-k/2}f\Vert$, $k=1,2$, $f\in{\mathcal H}$.
Let $\langle\cdot,\cdot\rangle$ denote the usual dual scalar product for the spaces
${\mathcal H}_k$ and ${\mathcal H}_{-k}$.
The inner product in ${\mathcal H}_k$ and ${\mathcal H}_{-k}$ is denoted by $(\cdot,\cdot)_{\pm k}$,
$k=1,2$.

The operator $A$ has an extension by continuity to ${\mathcal H}$
(${\mathcal H}_{1}$) and it is understood as a bounded operator
from ${\mathcal H}$ (${\mathcal H}_{1}$) into ${\mathcal H}_{-2}$
(${\mathcal H}_{-1}$).  We denote such an extension by ${\bf A}$ and
${\bf R}_z=({\bf A}-z)^{-1}$, $z\in\rho({\bf A})$, is a corresponding
resolvent.

In some cases, we can continue the usual dual scalar product
$\langle\cdot,\cdot\rangle$ to the case $\langle\omega,\phi\rangle$,
where $\omega,\phi\in{\mathcal H}_{-2}$ (of course $\omega\not=\phi$)
in the following way.  For example, if we can decompose vectors
$\omega=\omega_1+\omega_2$ and $\phi=\phi_1+\phi_2$ so that ${\rm
  spsupp}(\omega_i)\subseteq\Pi_i$, ${\rm
  spsupp}(\phi_i)\subseteq\Pi_i$, $i=1,2$, $\Pi\cap\Pi=\varnothing$
and $\omega_1\in{\mathcal H}_{+2}$, $\omega_2\in{\mathcal H}_{-2}$,
$\phi_1\in{\mathcal H}_{-2}$, $\phi_2\in{\mathcal H}_{+2}$, then we
can have
$\langle\omega,\phi\rangle=\langle\omega_1,\phi_1\rangle+\langle\omega_2,\phi_2\rangle<\infty$,
and $\langle\omega_1,\phi_2\rangle=\langle\omega_2,\phi_1\rangle=0$,
since ${\rm spsupp}(\omega_1)\cap{\rm spsupp}(\phi_2)=\varnothing$ and
${\rm spsupp}(\omega_2)\cap{\rm spsupp}(\phi_1)=\varnothing$.  Here
${\rm spsupp}(\cdot)$ denotes the spectral support of the
corresponding vector in the sense of the operator ${\bf A}$. By
definition (cf. \cite{D0}), for $\omega\in{\mathcal H}_{-2}$,
\begin{multline*}
{\rm spsupp}(\omega):=\{
\lambda\in{\mathbb R} \ \vert \ \forall O_{\lambda,\varepsilon} \exists
\psi\in C_0({\mathbb R})\cap L_2({\mathbb R},d\rho(\lambda)) \ : \\
{\rm supp}(\psi)\subset O_{\lambda,\varepsilon} \ \text{and} \
\int \limits_{{\mathbb R}} \hat\omega(\lambda)\psi(\lambda)\,d\rho(\lambda)\not=0
\},
\end{multline*}
where $O_{\lambda,\varepsilon}$ is an $\varepsilon$-neighborhood of a
point $\lambda$; $C_0({\mathbb R})$ is the set of continues functions
with compact supports on ${\mathbb R}$; $\hat\omega(\lambda)$ denotes
the Fourier image of the vector $\omega$. According to the central
spectral theorem \cite{B3}, the Fourier transform between ${\mathcal
  H}_{-2}$ and $L_2({\mathbb R},d\rho(\lambda))$ takes the operator
${\bf A}$ to the multiplication operator by the independent variable
$\lambda$ in the space $L_2({\mathbb R},d\rho(\lambda))$ with the
Borel measure $d\rho(\lambda)$.

Let us consider an operator $V$ in the $A$-scale, such that
${\mathfrak D}(V)\subseteq{\mathcal H}_{+k}$ and ${\mathfrak
  R}(V)\subseteq{\mathcal H}_{-k}$, $k=1,2$.  In our case,
$V=V^{\omega_1,\omega_2}=\langle\cdot,\omega_1\rangle\omega_2$,
$\omega_1, \omega_2\in{\mathcal H}_{-k}$, $k=1,2$.  Since the operator
${\bf A}$ is bounded and acts from ${\mathcal H}_0$ into the whole
${\mathcal H}_{-k}$, $k=1,2$, the expression ${\bf A}+V$ is a bounded
linear operator from ${\mathcal H}_{+k}$ into ${\mathcal H}_{-k}$. Let
us remark that due to \cite{BB} the adjoint operator $({\bf A}+V)^+$
is correctly defined and acts also from ${\mathcal H}_{+k}$ into
${\mathcal H}_{-k}$, $k=1,2$.

Now, the  formal expression
$A+\alpha\langle\cdot,\omega_1\rangle\omega_2$ has a sense of an
operator ${\bf A}+\langle\cdot,\omega_1\rangle\omega_2$ defined on
${\mathcal H}_{+k}$, acting from ${\mathcal H}_{+k}$ into ${\mathcal
  H}_{-k}$, $k=1,2$, and restricted to ${\mathcal H}$,
\begin{equation}\label{f2}
A^{\omega_1,\omega_2}=({\bf A}+\langle\cdot,\omega_1\rangle\omega_2)\restriction_{{\mathcal H}}.
\end{equation}

In what follows, we write usually $A$ instead of ${\bf A}$
and hence, $R_z$ will be used instead of ${\bf R}_z$.

Without loss of generality, supposing also that the operator $A$ is
strongly positive $A>0$, we give a constructive definition of the
operator $A^{\omega_1,\omega_2}$.

\begin{deff}\label{d1}
  Let $A$ be a positive self-adjoint operator on a separable Hilbert
  space ${\mathcal H}$, and consider $A^{\omega_1,\omega_2}$ in
  (\ref{f2}) with $\omega_i\in{\mathcal H}_{-2}\setminus{\mathcal H}$,
  $\omega_1\not=\omega_2$. We put $\eta_i={\bf A}^{-1}\omega_i$,
  $i=1,2$.

  I. The operator $A^{\omega_1,\omega_2}$ is called {singularly
    rank-one nonsymmetrically \emph{uniquely}} perturbed with
  respect to the operator $A$ if $\vert (A^{1/2}\eta_2,A^{1/2}\eta_1)
  \vert<\infty$.  Moreover,
\begin{equation}\label{f3}
{\mathfrak D}(A^{\omega_1,\omega_2})=
\left\{\psi=\varphi+b\eta_2 \ \vert \ \varphi\in{\mathfrak D}(A), \
b=b(\varphi)=\frac{(A\varphi,\eta_1)}{1+(A^{1/2}\eta_2,A^{1/2}\eta_1)}\right\}
\end{equation}
in the case $(A^{1/2}\eta_2,A^{1/2}\eta_1)\not=-1$, and
\begin{equation}\label{f4}
{\mathfrak D}(A^{\omega_1,\omega_2})={\mathfrak D}_{{\mathcal H}_1}\dot +\{c\eta_2\}, \quad
{\mathfrak D}_{{\mathcal H}_1}=\left\{\varphi\in{\mathfrak D}(A) \ \vert \
(A\varphi,\eta_1)=0 \right\}
\end{equation}
in the case $(A^{1/2}\eta_2,A^{1/2}\eta_1)=-1$, (this fact we denote
by $A^{\omega_1,\omega_2}\in{\mathcal P}(A)$).

II. The operator $A^{\omega_1,\omega_2}$ is called {singularly
  rank-one nonsymmetric \emph{parametrically}} perturbed with respect
to the operator $A$ if $(A^{1/2}\eta_2,A^{1/2}\eta_1)$ does not exist.
Moreover,
\begin{multline}\label{f5}
{\mathfrak D}(A^{\omega_1,\omega_2})=
\Big\{\psi=\varphi+b\eta_2 \ \vert \ \varphi\in{\mathfrak D}(A),  \\ \left.
b=b(\varphi)=\frac{(A\varphi,\eta_1)}{1+\tau+(A^{1/2}(A^2+1)^{-1/2}\eta_2,A^{1/2}(A^2+1)^{-1/2}\eta_1)}\right\}
\end{multline}
in the case $(A^{1/2}(A^2+1)^{-1/2}\eta_2,A^{1/2}(A^2+1)^{-1/2}\eta_1)\not=-\tau-1$,
where $\tau\in{\mathbb C}$ is a parameter, and
\begin{equation}\label{f6}
{\mathfrak D}(A^{\omega_1,\omega_2})={\mathfrak D}_{{\mathcal H}_1}\dot +\{c\eta_2\}, \quad
{\mathfrak D}_{{\mathcal H}_1}=\left\{\varphi\in{\mathfrak D}(A) \ \vert \
(A\varphi,\eta_1)=0 \right\}
\end{equation}
in the case $(A^{1/2}(A^2+1)^{-1/2}\eta_2,A^{1/2}(A^2+1)^{-1/2}\eta_1)=-\tau-1$,
(this fact we
denote by $A^{\omega_1,\omega_2}\in{\mathcal P}_{\tau}(A)$)

The action of the perturbed operator is given by the rule
\[
A^{\omega_1,\omega_2}\psi=A\varphi
\]
in each case.
\end{deff}

{\bf Remark 1}. If $\vert (A^{1/2}\eta_2,A^{1/2}\eta_1) \vert<\infty$ in the second part of
Definition \ref{d1}, then taking
$\tau=(A^{1/2}(A^2+1)^{-1/2}\eta_2,A^{1/2}(A^2+1)^{-1/2}\eta_1)$ we obtain the first part of
Definition \ref{d1}, that was considered also in \cite{VD}.

{\bf Remark 2}. If $(A^{1/2}\eta_2,A^{1/2}\eta_1)$  does not exists i.e., we have
the second part of Definition \ref{d1}, then still
$\vert (A^{1/2}(A^2+1)^{-1/2}\eta_2,A^{1/2}(A^2+1)^{-1/2}\eta_1) \vert<\infty$, since
$\omega_1, \omega_2\in{\mathcal H}_{-2}$.

{\bf Remark 3}.  The defined in Definition \ref{d1} operator
$A^{\omega_1,\omega_2}$ can be described in the following way.  A
linear closed operator $A^{\omega_1,\omega_2}\not=A$ densely defined
on ${\mathcal H}$ is nonsymmetric singularly perturbed with respect to
the operator $A$ if both sets,
\begin{align}
{\mathfrak D}&=\{f\in{\mathfrak D}(A)\cap{\mathfrak D}(A^{\omega_1,\omega_2}) \ \vert \ Af=\tilde Af\},\label{eq2} \\
{\mathfrak D}_*&=\{f\in{\mathfrak D}(A)\cap{\mathfrak D}((A^{\omega_1,\omega_2})^*) \ \vert \ Af=\tilde A^*f\}\label{eq3},
\end{align}
are dense in ${\mathcal H}$. In general $\tilde A\in{\mathcal P}_{\tau}(A)$.

It is clear that for each operator $A^{\omega_1,\omega_2}\in{\mathcal
  P}_{\tau}(A)$, there exist densely defined symmetric restrictions,
i.e., operators $\dot A:=A\restriction{\mathfrak D}$ and $\dot
A_*:=A\restriction{\mathfrak D}_*$ with nontrivial deficiency indices
\[
{\bf n}^{\pm}(\dot A)={\rm dim \ ker}(\dot A\mp z)^*\not=0, \quad  {\bf n}^{\pm}(\dot A_*)={\rm dim \ ker}(\dot
A_*\mp z)^*\not=0, \quad z\in\rho(A).
\]
(In this article we meet often the case where ${\bf n}^{\pm}(\dot
A)={\bf n}^{\pm}(\dot A_*)=1$.)

If ${\mathfrak D}={\mathfrak D}_*$ and $\tilde A=\tilde A^*$, then we
are in the case of the usual abstract definition of singularly
perturbed selfadjoint operators \cite{AKu1,Ko17} $\tilde A\in{\mathcal
  P}_s(A)$, that is, the definition given above generalizes the known
definition of a selfadjoint singular perturbation to the case of a
nonselfadjoint one.

The operator defined above, $A^{\omega_1,\omega_2}$, has the following general properties.
\begin{prop}\label{pp1}
For an arbitrary nonzero constant $a\in{\mathbb C}$ we have
$A^{a\omega_1,\omega_2}=A^{\omega_1,\bar a\omega_2}$.
\end{prop}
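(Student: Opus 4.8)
The plan is to reduce everything to the auxiliary vectors $\eta_i={\bf A}^{-1}\omega_i$ and to track how the scalar $a$ migrates through the sesquilinear expressions appearing in Definition \ref{d1}. Since ${\bf A}^{-1}$ is linear, replacing $\omega_1$ by $a\omega_1$ replaces $\eta_1$ by $a\eta_1$ and leaves $\eta_2$ unchanged, whereas replacing $\omega_2$ by $\bar a\omega_2$ replaces $\eta_2$ by $\bar a\eta_2$ and leaves $\eta_1$ unchanged. The whole argument then rests on one observation about the convention used for $(\cdot,\cdot)$, namely that it is linear in its first argument and conjugate-linear in its second: a factor $a$ sitting on $\eta_1$, which always occupies the second slot in the relevant forms, contributes $\bar a$, while a factor $\bar a$ sitting on $\eta_2$, which always occupies the first slot, contributes $\bar a$ as well. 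These two contributions coincide, and that coincidence is exactly the source of the claimed identity.

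First I would compute the defining sesquilinear form for each operator. For $A^{a\omega_1,\omega_2}$ one gets $(A^{1/2}\eta_2, A^{1/2}(a\eta_1)) = \bar a\,(A^{1/2}\eta_2, A^{1/2}\eta_1)$, and for $A^{\omega_1,\bar a\omega_2}$ one gets $(A^{1/2}(\bar a\eta_2), A^{1/2}\eta_1) = \bar a\,(A^{1/2}\eta_2, A^{1/2}\eta_1)$. Thus both operators produce the same value $\bar a\,(A^{1/2}\eta_2, A^{1/2}\eta_1)$ for this form; in particular they fall under the same clause of Definition \ref{d1}, and the dichotomy $\neq -1$ versus $=-1$ is decided identically. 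The same computation applies verbatim to the regularized form $(A^{1/2}(A^2+1)^{-1/2}\eta_2, A^{1/2}(A^2+1)^{-1/2}\eta_1)$ governing the parametric case, so the two operators are simultaneously in ${\mathcal P}(A)$ or simultaneously in ${\mathcal P}_\tau(A)$ with the same $\tau$.

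Next I would match the domains and coefficients. In the generic clause (\ref{f3}), for $A^{a\omega_1,\omega_2}$ the numerator becomes $(A\varphi, a\eta_1) = \bar a\,(A\varphi,\eta_1)$ and the added vector is $b\eta_2$, giving $\psi = \varphi + \frac{\bar a\,(A\varphi,\eta_1)}{1+\bar a\,(A^{1/2}\eta_2,A^{1/2}\eta_1)}\,\eta_2$; for $A^{\omega_1,\bar a\omega_2}$ the numerator is $(A\varphi,\eta_1)$ but the added vector is $b\,\bar a\eta_2$, yielding exactly the same $\psi$. Hence the two domains coincide vector by vector, and the action $A\varphi$ trivially agrees. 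For the exceptional clause (\ref{f4}) the side condition $(A\varphi,\eta_1)=0$ is unaffected by the nonzero scalar, since $\bar a\,(A\varphi,\eta_1)=0$ is equivalent to $(A\varphi,\eta_1)=0$, while the one-dimensional summand $\{c\eta_2\}$ equals $\{c\,\bar a\eta_2\}$ as $c$ ranges over ${\mathbb C}$; the parametric clauses (\ref{f5}) and (\ref{f6}) are handled in the same fashion. The only point requiring care — indeed the sole place where an error could creep in — is the bookkeeping of which argument slot each scalar lands in; once the sesquilinearity convention is fixed, every case collapses to the single identity $\bar a\,(A^{1/2}\eta_2,A^{1/2}\eta_1)$, and the equality of the two operators follows.
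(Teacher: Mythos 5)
Your proof is correct and follows essentially the same route as the paper's: a direct verification from Definition \ref{d1} that the domains and actions of $A^{a\omega_1,\omega_2}$ and $A^{\omega_1,\bar a\omega_2}$ coincide in each of the cases (\ref{f3})--(\ref{f6}). The paper states this verification in one line without details; you have simply carried it out explicitly (with the correct convention, matching the paper's, that $(\cdot,\cdot)$ is conjugate-linear in the second slot), so the two arguments are the same in substance.
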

\begin{proof}
  From Definition \ref{d1} in both cases (\ref{f3}), (\ref{f4}) and
  (\ref{f5}), (\ref{f6}) it follows that ${\mathfrak
    D}(A^{a\omega_1,\omega_2})={\mathfrak D}(A^{\omega_1,\bar
    a\omega_2})$ and $A^{a\omega_1,\omega_2}\psi=A^{\omega_1,\bar
    a\omega_2}\psi=A\varphi$.
\end{proof}
\begin{prop}\label{pp2}
The adjoint operator $(A^{\omega_1,\omega_2})^*$ satisfies the identity
\[
(A^{\omega_1,\omega_2})^*=A^{\omega_2,\omega_1}.
\]
\end{prop}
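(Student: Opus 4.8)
The plan is to establish the two inclusions $A^{\omega_2,\omega_1}\subseteq (A^{\omega_1,\omega_2})^*$ and $(A^{\omega_1,\omega_2})^*\subseteq A^{\omega_2,\omega_1}$, which together give the asserted operator equality, and to carry out the computation first in the generic uniquely-perturbed case (\ref{f3}). Writing $\kappa=(A^{1/2}\eta_2,A^{1/2}\eta_1)$, the denominator occurring in the coefficient for $A^{\omega_1,\omega_2}$ is $1+\kappa$, whereas interchanging $\omega_1\leftrightarrow\omega_2$ (hence $\eta_1\leftrightarrow\eta_2$) replaces it by $1+\bar\kappa$ for $A^{\omega_2,\omega_1}$, since $(A^{1/2}\eta_1,A^{1/2}\eta_2)=\bar\kappa$ by conjugate symmetry. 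This conjugation of the two denominators is exactly what makes the operators adjoint to one another, and it is the structural fact driving the whole argument.

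For the forward inclusion I would take $\psi=\varphi+b\eta_2\in{\mathfrak D}(A^{\omega_1,\omega_2})$ and $\chi=\xi+c\eta_1\in{\mathfrak D}(A^{\omega_2,\omega_1})$ with $b=(A\varphi,\eta_1)/(1+\kappa)$ and $c=(A\xi,\eta_2)/(1+\bar\kappa)$, and compare the two sides of $(A^{\omega_1,\omega_2}\psi,\chi)=(\psi,A^{\omega_2,\omega_1}\chi)$. The actions are $A\varphi$ and $A\xi$, and $A=A^*$ disposes of the ${\mathfrak D}(A)$-parts, so everything reduces to matching the cross terms $\bar c\,(A\varphi,\eta_1)$ and $b\,(\eta_2,A\xi)$. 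These agree because $\bar c=(\eta_2,A\xi)/(1+\kappa)$ and $b=(A\varphi,\eta_1)/(1+\kappa)$ carry the same denominator $1+\kappa$, giving $A^{\omega_2,\omega_1}\subseteq (A^{\omega_1,\omega_2})^*$.

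The reverse inclusion is the main point and amounts to a characterization of the adjoint domain. Given $\chi\in{\mathfrak D}((A^{\omega_1,\omega_2})^*)$ with $(A^{\omega_1,\omega_2})^*\chi=\chi^*$, the defining identity reads $(A\varphi,\chi)=(\varphi,\chi^*)+b(\varphi)(\eta_2,\chi^*)$ for every $\varphi\in{\mathfrak D}(A)$. Substituting $b(\varphi)=(A\varphi,\eta_1)/(1+\kappa)$ and setting $d=(\eta_2,\chi^*)/(1+\kappa)$, I would regroup this as $(A\varphi,\chi-\bar d\,\eta_1)=(\varphi,\chi^*)$, valid for all $\varphi\in{\mathfrak D}(A)$; self-adjointness of $A$ then forces $\xi:=\chi-\bar d\,\eta_1\in{\mathfrak D}(A)$ and $\chi^*=A\xi$. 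The crux is the self-consistency check on $d$: since now $(\eta_2,\chi^*)=(\eta_2,A\xi)=\overline{(A\xi,\eta_2)}$, one obtains $\bar d=(A\xi,\eta_2)/(1+\bar\kappa)$, which is precisely the coefficient that (\ref{f3}), with the roles of $\omega_1$ and $\omega_2$ interchanged, assigns to $A^{\omega_2,\omega_1}$. Hence $\chi=\xi+\bar d\,\eta_1\in{\mathfrak D}(A^{\omega_2,\omega_1})$ and the adjoint acts by $A\xi=A^{\omega_2,\omega_1}\chi$. Read as a chain of equivalences, this single computation in fact delivers both inclusions at once.

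Finally I would note that the remaining cases go through by the same scheme. The parametric case (\ref{f5}) is identical once $\kappa$ is replaced by the regularized quantity $\tau+(A^{1/2}(A^2+1)^{-1/2}\eta_2,A^{1/2}(A^2+1)^{-1/2}\eta_1)$, the adjoint then lying in ${\mathcal P}_{\bar\tau}(A)$; here Remark~2 guarantees the regularized form is finite. In the degenerate cases (\ref{f4}), (\ref{f6}), where the denominator vanishes, the freely varying coefficient of $\eta_2$ in ${\mathfrak D}(A^{\omega_1,\omega_2})$ immediately forces $(\eta_2,\chi^*)=0$; restricting the identity to ${\mathfrak D}_{{\mathcal H}_1}$ and using that a linear functional vanishing on the kernel of $\varphi\mapsto(A\varphi,\eta_1)$ is a multiple of it, one again writes $\chi=\xi+\bar d\,\eta_1$ with $\xi\in{\mathfrak D}(A)$ and $\chi^*=A\xi$, and the condition $(\eta_2,A\xi)=0$ places $\chi$ exactly in the swapped degenerate domain of $A^{\omega_2,\omega_1}$. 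I expect the only delicate point throughout to be keeping the conjugations straight so that $1+\kappa$ and $1+\bar\kappa$ line up correctly; the rest is bookkeeping.
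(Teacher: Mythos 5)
Your proposal is correct, and its core computation---matching the cross terms $\bar c\,(A\varphi,\eta_1)$ and $b\,(\eta_2,A\xi)$ through the conjugate denominators $1+\kappa$ and $1+\bar\kappa$---is exactly the computation the paper performs; the paper carries it out directly in the parametric case (\ref{f5}), with $\tau+(A^{1/2}(A^2+1)^{-1/2}\eta_2,A^{1/2}(A^2+1)^{-1/2}\eta_1)$ playing the role of your $\kappa$, and then treats (\ref{f3}), (\ref{f4}) as particular cases. The genuine difference is that the paper stops after that step: it verifies the sesquilinear identity $(A^{\omega_1,\omega_2}f_1,f_2)=(f_1,A^{\omega_2,\omega_1}f_2)$ for elements of the two prescribed domains, which by itself establishes only the inclusion $A^{\omega_2,\omega_1}\subseteq(A^{\omega_1,\omega_2})^*$, and declares the proposition proved. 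Your third and fourth paragraphs supply precisely what that argument leaves implicit: taking an arbitrary $\chi$ in the adjoint domain, you use self-adjointness of $A$ to force the decomposition $\chi=\xi+\bar d\,\eta_1$ with $\xi\in{\mathfrak D}(A)$ and $\chi^*=A\xi$, and the self-consistency computation $\bar d=(A\xi,\eta_2)/(1+\bar\kappa)$ places $\chi$ exactly in the swapped domain---including the degenerate cases (\ref{f4}), (\ref{f6}), where the free coefficient of $\eta_2$ correctly forces $(\eta_2,\chi^*)=0$. So your argument buys actual operator equality, not just one inclusion, which is a more complete proof of the stated identity. One further point in your favor: you record that the adjoint lies in ${\mathcal P}_{\bar\tau}(A)$, whereas the paper writes the denominator of $\bar b_2$ with an unconjugated $\tau$, which is accurate only for real $\tau$; your bookkeeping of the conjugations is the more careful one.
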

\begin{proof}
For the proof we use  the second part of Definition \ref{d1} for
$A^{\omega_1,\omega_2}$ and $A^{\omega_2,\omega_1}$ and verify the identity
\begin{equation}\label{ff1}
(A^{\omega_1,\omega_2}f_1,f_2)=(f_1,A^{\omega_2,\omega_1}f_2),
\end{equation}
for $f_1\in{\mathfrak D}(A^{\omega_1,\omega_2})$ and
$f_2\in{\mathfrak D}(A^{\omega_2,\omega_1})$ of the form
$f_1=\varphi_1+b_1\eta_2$ and $f_2=\varphi_2+b_2\eta_1$, correspondingly, $\varphi_1,\varphi_2\in{\mathfrak D}(A)$.
The left-hand side  of (\ref{ff1}) is of the form
\begin{equation}\label{ff2}
(A^{\omega_1,\omega_2}f_1,f_2)=(A^{\omega_1,\omega_2}(\varphi_1+b_1\eta_2),(\varphi_2+b_2\eta_1))=
(A\varphi_1,\varphi_2)+\bar b_2(A\varphi_1,\eta_1),
\end{equation}
where
\[
b_1=b_1(\varphi_1)=\frac{(A\varphi_1,\eta_1)}{1+\tau+(A^{1/2}(A^2+1)^{-1/2}\eta_2,A^{1/2}(A^2+1)^{-1/2}\eta_1)}.
\]
The right-hand side  of (\ref{ff1}) is of the form
\begin{equation}\label{ff3}
(f_1,A^{\omega_2,\omega_1}f_2)=((\varphi_1+b_1\eta_2),A^{\omega_2,\omega_1}(\varphi_2+b_2\eta_1))=
(\varphi_1,A\varphi_2)+b_1(\eta_2,A\varphi_2),
\end{equation}
where
\[
b_2=b_2(\varphi_2)=\frac{(A\varphi_2,\eta_2)}{1+\tau+(A^{1/2}(A^2+1)^{-1/2}\eta_1,A^{1/2}(A^2+1)^{-1/2}\eta_2)}.
\]

The statement of the proposition follows from the obvious equality of last terms from (\ref{ff2})
and (\ref{ff3}),
\begin{multline*}
\frac{\overline{(A\varphi_2,\eta_2)}}{1+\tau+\overline{(A^{1/2}(A^2+1)^{-1/2}\eta_1,A^{1/2}(A^2+1)^{-1/2}\eta_2)}}(A\varphi_1,\eta_1)\\
=\frac{(A\varphi_1,\eta_1)}{1+\tau+(A^{1/2}(A^2+1)^{-1/2}\eta_2,A^{1/2}(A^2+1)^{-1/2}\eta_1)}(\eta_2,A\varphi_2).
\end{multline*}

The proof in case (\ref{f5}), i.e.,
$(A^{1/2}(A^2+1)^{-1/2}\eta_2,A^{1/2}(A^2+1)^{-1/2}\eta_1)\not=-\tau-1$
is completed.  The case (\ref{f6}), i.e.,
$(A^{1/2}(A^2+1)^{-1/2}\eta_2,A^{1/2}(A^2+1)^{-1/2}\eta_1)=-\tau-1$ is
also valid.

The cases (\ref{f3}), (\ref{f4}) are particular with respect to
(\ref{f5}), (\ref{f6}).
\end{proof}
\section{The description of a rank-one nonsymmetric singular perturbation by resolvents}
In this section we consider the perturbed operator with a parameter
$\alpha\in{\mathbb C}$ and denote it by $\tilde
A=A+\alpha\langle\cdot,\omega_1\rangle\omega_2$, where
$\omega_i\in{\mathcal H}_{-2}\setminus{\mathcal H}$ and
$\Vert\omega_i\Vert_{-1}=1$, $i=1,2$.  The set of such operators is
also denoted by ${\mathcal P}_{\tau}(A)$.  At the beginning, let us
briefly remark that if $\tilde A\in{\mathcal P}_{\tau}(A)$ then for
the adjoint operator, we have $\tilde A^*\in{\mathcal P}_{\tau}(A)$, which is also due to the
investigations in \cite{BB} and Proposition \ref{pp2}.
\begin{theorem}\label{t1}
  For the resolvents $R_z=(A-z)^{-1}$ and $\tilde R_z=(\tilde
  A-z)^{-1}$ of operators $A=A^*>1$ and $\tilde A\in{\mathcal
    P}_{\tau}(A)$ on a separable Hilbert space ${\mathcal H}$, a formula
   similar to  M.~Krein's formula holds for $z, \xi,
  \zeta\in\rho(A)\cap\rho(\tilde A)$,
\begin{equation}\label{f10}
\tilde R_z=R_z+b_z(\cdot,n_{\bar z})m_z,
\end{equation}
with
\begin{equation}\label{f11}
n_z=(A-\xi)(A-z)^{-1}n_{\xi}, \quad  m_z=(A-\zeta)(A-z)^{-1}m_{\zeta},
\end{equation}
where $n_z,m_z\in{\mathcal H}\setminus{\mathcal H}_{+2}$ and
\begin{equation}\label{f12}
b_z^{-1}-b_{\xi}^{-1}=(\xi-z)(m_{\xi},n_{\bar z}).
\end{equation}
The vectors $n_z, m_z$ and the number $b_z$ are connected with $\omega_1, \omega_2$ as follows:
\begin{equation}\label{f13}
n_z=R_z\omega_1, m_z=R_z\omega_2,  \quad -b_z^{-1}={\alpha}^{-1}+\tau+
\langle(A^2+1)^{-1}\omega_2,(1+\bar zA)R_{\bar z}\omega_1\rangle,
\end{equation}
where $\alpha\not=0$.
\end{theorem}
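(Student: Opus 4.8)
The plan is to establish the Krein-type resolvent formula by a direct verification strategy, starting from the constructive description of $\tilde A\in{\mathcal P}_{\tau}(A)$ in Definition~\ref{d1}. First I would identify the natural candidate vectors: setting $\eta_i={\bf A}^{-1}\omega_i$, the domain elements have the form $\psi=\varphi+b\eta_2$, so the perturbation deficiency direction is $\eta_2$ and the defect functional is built from $\omega_1$. Passing from the fixed reference point to a general $z$, the resolvent-shifted vectors are $m_z=R_z\omega_2$ and $n_z=R_z\omega_1$ (note $\eta_2=A^{-1}\omega_2=-m_0$-type objects once one accounts for the normalization), and since $\omega_i\in{\mathcal H}_{-2}\setminus{\mathcal H}$ one checks $n_z,m_z\in{\mathcal H}\setminus{\mathcal H}_{+2}$ as claimed. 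The first-resolvent (Hilbert) identity $R_z-R_\xi=(z-\xi)R_zR_\xi$ immediately yields the intertwining relations (\ref{f11}) for $n_z,m_z$, which are the easy structural part.

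\emph{The core of the argument} is to verify that the operator $\tilde R_z$ defined by the right-hand side of (\ref{f10}) actually inverts $\tilde A-z$. Concretely, I would apply $(\tilde A-z)$ to an arbitrary $g\in{\mathcal H}$, write $\tilde R_z g=R_zg+b_z(g,n_{\bar z})m_z$, and check two things: that $\tilde R_z g$ lies in ${\mathfrak D}(\tilde A)$ (i.e.\ it has the form $\varphi+b\eta_2$ with the correct coefficient $b=b(\varphi)$ from (\ref{f5})), and that $(\tilde A-z)\tilde R_z g=g$. The term $R_zg$ contributes $(A-z)R_zg=g$, so everything reduces to forcing the $m_z$-term to be annihilated, which is exactly where the scalar $b_z$ must take its prescribed value. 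Matching the domain constraint $b=(A\varphi,\eta_1)/(1+\tau+(\cdots))$ against the coefficient produced by the resolvent formula is what pins down the identity (\ref{f13}) for $-b_z^{-1}$; here one uses $\langle\cdot,\omega_1\rangle$ acting on $R_zg$ and the regularized inner product, rewriting $(A^{1/2}(A^2+1)^{-1/2}\eta_2,A^{1/2}(A^2+1)^{-1/2}\eta_1)$ in terms of $\langle(A^2+1)^{-1}\omega_2,(1+\bar zA)R_{\bar z}\omega_1\rangle$ via the functional calculus and the definition $\eta_i=A^{-1}\omega_i$.

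The relation (\ref{f12}) for $b_z$ is then obtained as a consistency condition: since both $b_z$ and $b_\xi$ describe the same operator $\tilde A$ through different reference points, I would compute $b_z^{-1}-b_\xi^{-1}$ directly from the explicit expression in (\ref{f13}), using (\ref{f11}) and the resolvent identity to simplify the difference of the two spectral integrals, and show it collapses to $(\xi-z)(m_\xi,n_{\bar z})$. I expect the main obstacle to be the careful bookkeeping of the \emph{regularized} pairing: because $\omega_1,\omega_2\in{\mathcal H}_{-2}$ rather than ${\mathcal H}_{-1}$, the naive inner product $(A^{1/2}\eta_2,A^{1/2}\eta_1)$ need not exist (this is precisely the parametric case of Definition~\ref{d1}), so every manipulation must be performed with the $(A^2+1)^{-1}$-regularization and the dual pairing $\langle\cdot,\cdot\rangle$, justifying each passage by the spectral theorem in $L_2({\mathbb R},d\rho(\lambda))$ and checking that all the integrals appearing in (\ref{f13}) and (\ref{f12}) converge. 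Once the well-definedness of these pairings is secured, the algebra of matching coefficients is routine; the delicate point is ensuring the ${\mathcal H}_{-2}$-level objects are handled consistently throughout.
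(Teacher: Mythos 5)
Your proposal is correct in substance, but its central step runs in the opposite direction from the paper's. The paper \emph{derives} (\ref{f10}) and (\ref{f13}) rather than verifying them: starting from $\tilde A-z=A-z+\alpha\langle\cdot,\omega_1\rangle\omega_2$ it writes the second-resolvent-type identity (\ref{f14}), splits the ill-defined pairing $\langle\omega_2,(A-\bar z)^{-1}\omega_1\rangle$ into a divergent part, which is replaced by the parameter $\tau$, plus the convergent regularized part $\langle(A^2+1)^{-1}\omega_2,(1+\bar zA)(A-\bar z)^{-1}\omega_1\rangle$ (formula (\ref{f15})), then applies the identity to the vector $\omega_2$ itself and solves the resulting scalar equation for $(\tilde A-z)^{-1}\omega_2$; substituting back produces the Krein-type formula together with $b_z$ as in (\ref{f17}) in one stroke. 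You instead take the formula as an ansatz and verify, against the domain description (\ref{f5}) of Definition \ref{d1}, that $R_zg+b_z(g,n_{\bar z})m_z$ lies in ${\mathfrak D}(\tilde A)$ (writing $m_z=\eta_2+z(A-z)^{-1}\eta_2$ to split off the $\eta_2$-component) and is mapped to $g$ by $\tilde A-z$, the domain coefficient condition being exactly what pins down $b_z$. Both routes work, and they buy different things: the paper's computation yields $(\tilde A-z)^{-1}\omega_2$ explicitly, which is reused later in the spectral analysis, while your verification never needs to interpret $(\tilde A-z)^{-1}$ as an operator on ${\mathcal H}_{-2}$ --- a point the paper uses in (\ref{f14})--(\ref{f15}) without comment, so your route is arguably cleaner on that score. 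Your treatment of (\ref{f11}) and (\ref{f12}) coincides with the paper's (commuting resolvents, and collapsing the difference of the two spectral integrals in (\ref{f13}) to $(\xi-z)(m_\xi,n_{\bar z})$). One bookkeeping point you must add: the coefficient formula (\ref{f5}) you quote contains no coupling constant (it corresponds to a fixed normalization of $\alpha$), so matched literally it gives $-b_z^{-1}=1+\tau+\langle(A^2+1)^{-1}\omega_2,(1+\bar zA)R_{\bar z}\omega_1\rangle$ rather than (\ref{f13}); to recover the $\alpha^{-1}$ you must first absorb $\alpha$ into $\omega_1$ (Proposition \ref{pp1}), and track how $\eta_1$, $\tau$ and the regularized inner product rescale under $\omega_1\mapsto\bar\alpha\omega_1$, or else rederive (\ref{f5}) with the coupling constant present.
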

The case $\alpha=0$ can be also included in the considerations, since in case  $\alpha=0$,
we put $b_z\equiv 0$ and obtain  $\tilde R_z\equiv R_z$.
\begin{proof}
  From the expression $\tilde
  A=A+\alpha\langle\cdot,\omega_1\rangle\omega_2$ and for some
  $z\in\rho(A)\cap\rho(\tilde A)$ we have $\tilde
  A-z=A-z+\alpha\langle\cdot,\omega_1\rangle\omega_2$, and hence
\begin{equation}\label{f14}
(\tilde A-z)^{-1}=(A-z)^{-1}-\alpha\langle\cdot,(A-\bar z)^{-1}\omega_1\rangle(\tilde A-z)^{-1}\omega_2,
\end{equation}
where $(A-z)^{-1}$ and $(\tilde A-z)^{-1}$ are considered as operators
from ${\mathcal H}_{-2}$ into ${\mathcal H}$.  In the case of the
first part of Definition \ref{d1} we can continue our proof as in
\cite{VD}.  In the case of the second (general) part of Definition
\ref{d1} we need to consider the next expression following from
(\ref{f14}):
\begin{multline}\label{f15}
(\tilde A-z)^{-1}=(A-z)^{-1}-\alpha\{
\langle A(A^2+1)^{-1/2}\cdot,A(A-\bar z)^{-1}\omega_1\rangle \\
+\langle(A^2+1)^{-1}\cdot,(1+\bar zA)(A-\bar z)^{-1}\omega_1\rangle
\}(\tilde A-z)^{-1}\omega_2.
\end{multline}
In particular, for $\omega_2\in{\mathcal H}_{-2}$ we have
\[
(\tilde A-z)^{-1}\omega_2=(A-z)^{-1}\omega_2-\alpha\{
\tau +\langle(A^2+1)^{-1}\omega_2,(1+\bar zA)(A-\bar z)^{-1}\omega_1\rangle
\}(\tilde A-z)^{-1}\omega_2.
\]
where instead of  $\langle \omega_2,A(A^2+1)^{-1} \omega_1 \rangle$ we write $\tau$.
Substituting
\[
(\tilde A-z)^{-1}\omega_2=
\frac{\alpha}{1+\alpha\{\tau +\langle(A^2+1)^{-1}\omega_2,(1+\bar zA)(A-\bar z)^{-1}\omega_1\rangle\}}(A-z)^{-1}\omega_2.
\]
into (\ref{f15}) we get
\begin{multline}
(\tilde A-z)^{-1}=(A-z)^{-1}-
\frac{1}{\alpha^{-1}+\tau+
\langle(A^2+1)^{-1}\omega_2,(1+\bar zA)(A-\bar z)^{-1}\omega_1\rangle}\\
\times(\cdot,(A-\bar z)^{-1}\omega_1)(A-z)^{-1}\omega_2.
\end{multline}
where $\alpha\not=0$.
If we put
\begin{equation}\label{f16}
n_{z}=(A-z)^{-1}\omega_1, \quad  m_{z}=(A-z)^{-1}\omega_2,
\end{equation}
and
\begin{equation}\label{f17}
b_z^{-1}=-({\alpha^{-1}+\tau+\langle(A^2+1)^{-1}\omega_2,(1+\bar zA)(A-\bar z)^{-1}\omega_1\rangle}),
\end{equation}
then we obtain  (\ref{f13}).

Analogously, starting with $\tilde
A^*=A+\bar\alpha\langle\cdot,\omega_2\rangle\omega_1$, we also go to
(\ref{f16}) and (\ref{f17}) in the equivalent form
\[
\bar b_z^{-1}=-({\bar\alpha^{-1}+\bar \tau+\langle(A^2+1)^{-1}\omega_1,(1+zA)(A-z)^{-1}\omega_2\rangle}),
\]

Let us remark that if  $\omega_1,\omega_2\in{\mathcal H}_{-2}\setminus{\mathcal H}$ then
$n_{z}, m_{z}\in{\mathcal H}\setminus{\mathcal H}_{2}$ (see, for example \cite{Ko15}),
and, more precisely, if $\omega_1,\omega_2\in{\mathcal H}_{-2}\setminus{\mathcal H}_{-1}$, then
$n_{z}, m_{z}\in{\mathcal H}\setminus{\mathcal H}_{+1}$ and
if $\omega_1,\omega_2\in{\mathcal H}_{-1}\setminus{\mathcal H}$, then
$n_{z}, m_{z}\in{\mathcal H}_{+1}\setminus{\mathcal H}_{+2}$.

By using notations (\ref{f16}) in the form
$n_{z}=(A-z)^{-1}\omega_1$ and $n_{\xi}=(A-\xi)^{-1}\omega_1$ we
obtain $\omega_1=(A-z)n_{z}=(A-\xi)n_{\xi}$ and consequently the first
expression in (\ref{f11}),
\[
n_z=(A-\xi)(A-z)^{-1}n_{\xi}.
\]
This expression makes sense in ${\mathcal H}$ if we consider $A$ on ${\mathcal H}$ (but not
${\bf A}$).
Analogously we obtain the second expression in (\ref{f11}).
By a use of the Hilbert identity with (\ref{f11}), we obtain (\ref{f12}),
\begin{equation*}
\begin{aligned}
b_{z}^{-1}-&b_{\xi}^{-1}\\
&=-\langle(A^2+1)^{-1}\omega_2,(1+\bar zA)(A-\bar z)^{-1}\omega_1\rangle+
\langle(A^2+1)^{-1}\omega_2,(1+\bar \xi A)(A-\bar\xi)^{-1}\omega_1\rangle\\
&=\langle(A^2+1)^{-1}\omega_2,\left( (1+\bar zA)(A-\bar \xi)^{-1}-(1+\bar \xi A)(A-\bar z)^{-1} \right)\omega_1\rangle \\
&=(\xi-z)\langle (A-\xi)^{-1}\omega_2, (A-\bar z)^{-1}\omega_1\rangle \\
&=(\xi-z)(m_{\xi},n_{\bar z}).
\end{aligned}
\end{equation*}
This completes the proof. 
\end{proof}

Let us remark that  it is possible that  $b_z=\infty$, and it is so iff $z\in\sigma_p(\tilde A)$, but
(\ref{f10}) is also valid in such a case.

At the end of the article, we give an example that illustrates the
following corollary from Theorem \ref{t1}.
\begin{cor}\label{c1}
  If the operators $A=A^*$ and $\tilde A\in{\mathcal P}(A)$ have
  inverses on a separable Hilbert space ${\mathcal H}$, i.e.,
  $0\in\rho(A)\cap\rho(\tilde A)$, then (\ref{f10}), (\ref{f11}) and
  (\ref{f12}) have the form
\begin{equation}\label{f18}
\tilde A^{-1}=A^{-1}+b_0(\cdot,n_0)m_0,
\end{equation}
where
\begin{equation}\label{f19}
n_0=A^{-1}\omega_1, \quad m_0=A^{-1}\omega_2, \quad
-b_0^{-1}={\alpha}^{-1}+\langle\omega_2,A^{-1}\omega_1\rangle.
\end{equation}
\end{cor}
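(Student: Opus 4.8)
The plan is to obtain Corollary \ref{c1} as a direct specialization of Theorem \ref{t1} by setting $z=0$ (which is permitted precisely because the hypothesis $0\in\rho(A)\cap\rho(\tilde A)$ guarantees that $0$ is a common regular point, so both resolvents $R_0=A^{-1}$ and $\tilde R_0=\tilde A^{-1}$ are well defined bounded operators). Since $\tilde A\in{\mathcal P}(A)$ rather than only ${\mathcal P}_{\tau}(A)$, we are in the first part of Definition \ref{d1}, where the inner product $(A^{1/2}\eta_2,A^{1/2}\eta_1)$ exists and, by Remark 1, the parameter $\tau$ collapses into this finite quantity. Consequently the $\tau$-term in the formulas of Theorem \ref{t1} is absorbed, and we may work directly with the simpler expression $-b_z^{-1}=\alpha^{-1}+\langle(A^2+1)^{-1}\omega_2,(1+\bar z A)R_{\bar z}\omega_1\rangle$ from (\ref{f13}) with $\tau=0$ already incorporated.

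First I would substitute $z=0$ into (\ref{f16}) to read off $n_0=A^{-1}\omega_1$ and $m_0=A^{-1}\omega_2$, and into (\ref{f10}) to obtain $\tilde A^{-1}=A^{-1}+b_0(\cdot,n_0)m_0$, which is exactly (\ref{f18}). The only nontrivial computation is the evaluation of the scalar coefficient (\ref{f19}), so the bulk of the work is in simplifying $-b_0^{-1}$. Setting $\bar z=0$ in the inner product of (\ref{f17}) gives $\langle(A^2+1)^{-1}\omega_2,(1+0)\cdot A^{-1}\omega_1\rangle$, where $R_{\bar z}=R_0=A^{-1}$ and the factor $(1+\bar z A)$ reduces to the identity. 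The remaining task is to show that this equals the claimed $\langle\omega_2,A^{-1}\omega_1\rangle$, i.e.\ that the regularizing factor $(A^2+1)^{-1}$ combined with the extra power of $A$ telescopes into a single inverse.

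The step I expect to require the most care is this algebraic collapse of the duality pairing. Concretely, one writes $\langle(A^2+1)^{-1}\omega_2, A^{-1}\omega_1\rangle$ and must recognize that, since $0\in\rho(A)$ forces $\omega_i\in{\mathcal H}_{-2}$ to behave well under $A^{-1}$ (so that $n_0,m_0\in{\mathcal H}$), the pairing can be moved onto the symmetric bounded operator $(A^2+1)^{-1}A^{-1}$ acting between the scales; rewriting $(A^2+1)^{-1}A^{-1}$ and pairing against the dual vectors, the factors reorganize so that the net effect on the finite pairing is simply $\langle\omega_2,A^{-1}\omega_1\rangle$. The subtlety is purely that these manipulations take place in the $A$-scale rather than in ${\mathcal H}$ itself, so I would justify each rearrangement using the self-adjointness and commutativity of the functions of $A$ (all of $A^{-1}$, $(A^2+1)^{-1}$ commute and are self-adjoint) together with the duality pairing $\langle\cdot,\cdot\rangle$ between ${\mathcal H}_{-2}$ and ${\mathcal H}_{+2}$. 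Once the coefficient is shown to equal the stated value, formulas (\ref{f18}) and (\ref{f19}) follow, and (\ref{f11})--(\ref{f12}) specialize trivially by choosing $\xi=z=0$ (making the right-hand side of (\ref{f12}) vanish and (\ref{f11}) an identity), which completes the proof.
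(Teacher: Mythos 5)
Your overall route --- substituting $z=0$ into Theorem \ref{t1} --- is exactly the paper's own (second) argument, but your treatment of the coefficient $b_0$ contains a genuine error, and it sits precisely at the step you yourself flag as the delicate one. First, for $\tilde A\in{\mathcal P}(A)$ the parameter $\tau$ is \emph{not} zero: by Remark 1 (and by the proof of Theorem \ref{t1}, where $\tau$ stands in for $\langle\omega_2,A(A^2+1)^{-1}\omega_1\rangle$), in the case ${\mathcal P}(A)$ the parameter takes the specific finite value $\tau=\langle\omega_2,A(A^2+1)^{-1}\omega_1\rangle$, which is generically nonzero; it is not ``absorbed'' and cannot be set to $0$. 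Second, the ``telescoping'' you invoke is a false operator identity: moving $(A^2+1)^{-1}$ across the pairing gives $\langle(A^2+1)^{-1}\omega_2,A^{-1}\omega_1\rangle=\langle\omega_2,A^{-1}(A^2+1)^{-1}\omega_1\rangle$, and $A^{-1}(A^2+1)^{-1}\not=A^{-1}$; no rearrangement of commuting self-adjoint functions of $A$ will make that regularized pairing equal $\langle\omega_2,A^{-1}\omega_1\rangle$ on its own.

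Your two mistakes cancel each other, which is why you land on the stated (\ref{f19}), but neither step is valid as written. The correct computation keeps $\tau$: from the spectral identity $\frac{1}{\lambda}=\frac{\lambda}{\lambda^2+1}+\frac{1}{\lambda(\lambda^2+1)}$ one gets
\[
\tau+\langle(A^2+1)^{-1}\omega_2,A^{-1}\omega_1\rangle
=\langle\omega_2,A(A^2+1)^{-1}\omega_1\rangle+\langle\omega_2,A^{-1}(A^2+1)^{-1}\omega_1\rangle
=\langle\omega_2,A^{-1}\omega_1\rangle,
\]
so that (\ref{f17}) at $z=0$ becomes $-b_0^{-1}=\alpha^{-1}+\langle\omega_2,A^{-1}\omega_1\rangle$, i.e.\ the cancellation happens between $\tau$ and the regularized pairing, not inside the pairing alone. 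Alternatively (the paper's first way), since $\tilde A\in{\mathcal P}(A)$ guarantees $\vert\langle\omega_2,A^{-1}\omega_1\rangle\vert<\infty$, you can rerun the proof of Theorem \ref{t1} without ever splitting off the $(A^2+1)^{-1}$ regularization, obtaining $-b_z^{-1}=\alpha^{-1}+\langle\omega_2,(A-\bar z)^{-1}\omega_1\rangle$ directly, and then set $z=0$. The remainder of your proposal (reading off $n_0$, $m_0$ from (\ref{f16}), formula (\ref{f18}) from (\ref{f10}), and the trivial specialization of (\ref{f11})--(\ref{f12})) is fine.
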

\begin{proof} 
  The proof follows from Theorem \ref{t1} in
  two ways. The first one is to take into account that
  $\vert\langle\omega_2,A^{-1}\omega_1\rangle\vert<\infty$ and repeat
  the proof of Theorem \ref{t1}. The second one is to substitute
  $z=0$ into (\ref{f10}), (\ref{f11}) and (\ref{f12}).
\end{proof}
\section{Spectral properties of rank one  nonsymmetric singular perturbations}
As a starting point, let us remark that the continuous spectrum
$\sigma_c(A)$ of the operator $A$ is unchanged by finite rank
perturbations, i.e., $\sigma_c(A)=\sigma_c(\tilde A)$, $\tilde
A\in{\mathcal P}_{\tau}(A)$.
\begin{theorem}\label{t2}
  Let the perturbed operator $\tilde A\in{\mathcal P}_{\tau}(A)$
  possess a new eigenvalue $\lambda\in{\mathbb C}$ comparing to $A$,
  i.e., there exist $\lambda\in\sigma_p(\tilde A)$,
  $\lambda\not\in\sigma_p(A)$. Then for the corresponding eigenvectors
  $\varphi$, $\psi$: $\tilde A\varphi=\lambda\varphi$ and $\tilde
  A^*\psi=\bar\lambda\psi$, the following relations hold true:
\begin{equation}\label{fx1+}
(\lambda-z) b_z(\varphi,n_{\bar z})=1, \quad \varphi=(A-z)(A-\lambda)^{-1}m_{z};
\end{equation}
\begin{equation}\label{fx11+}
(\bar\lambda-\bar z) \bar b_z(\psi,m_z)=1, \quad \psi=(A-\bar z)(A-\bar\lambda)^{-1}n_{\bar z}.
\end{equation}
\end{theorem}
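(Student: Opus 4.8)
The plan is to exploit the Krein-type resolvent formula \eqref{f10} directly, reading off the eigenvalue condition from the pole structure of $\tilde R_z$. Suppose $\lambda\in\sigma_p(\tilde A)$ with $\lambda\notin\sigma_p(A)$, and let $\varphi$ be the corresponding eigenvector: $\tilde A\varphi=\lambda\varphi$. The starting observation is that for any fixed $z\in\rho(A)\cap\rho(\tilde A)$, the resolvent $\tilde R_z=(\tilde A-z)^{-1}$ acts on $\varphi$ by $\tilde R_z\varphi=(\lambda-z)^{-1}\varphi$, since $\varphi$ is an eigenvector of $\tilde A$ with eigenvalue $\lambda$. Applying \eqref{f10} to $\varphi$ then gives
\begin{equation*}
\frac{1}{\lambda-z}\,\varphi=R_z\varphi+b_z(\varphi,n_{\bar z})\,m_z.
\end{equation*}
The first step is to rearrange this into $\big(R_z-\tfrac{1}{\lambda-z}\big)\varphi=-b_z(\varphi,n_{\bar z})m_z$. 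Using $R_z=(A-z)^{-1}$ and combining over a common denominator, the left side becomes $(A-z)^{-1}(A-\lambda)(\lambda-z)^{-1}\varphi$ up to sign; inverting, one obtains $\varphi$ as a scalar multiple of $(A-z)(A-\lambda)^{-1}m_z$. This yields the second equation in \eqref{fx1+}, where the proportionality constant is fixed by normalization (or absorbed into $m_z$), and simultaneously forces the scalar consistency condition $(\lambda-z)b_z(\varphi,n_{\bar z})=1$, which is the first equation in \eqref{fx1+}.

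More carefully, I would substitute the claimed form $\varphi=(A-z)(A-\lambda)^{-1}m_z$ back into the displayed identity and verify both equations are equivalent to a single scalar relation. Since $R_z\varphi=(A-z)^{-1}(A-z)(A-\lambda)^{-1}m_z=(A-\lambda)^{-1}m_z$, the identity reads $(\lambda-z)^{-1}(A-z)(A-\lambda)^{-1}m_z=(A-\lambda)^{-1}m_z+b_z(\varphi,n_{\bar z})m_z$. Writing $(A-z)=(A-\lambda)+(\lambda-z)$ splits the left side into $(A-\lambda)^{-1}m_z+(\lambda-z)^{-1}(\lambda-z)(A-\lambda)^{-1}\cdot\frac{1}{\text{?}}$; the cleaner route is to note $(\lambda-z)^{-1}(A-z)(A-\lambda)^{-1}-(A-\lambda)^{-1}=(\lambda-z)^{-1}\big[(A-z)-(\lambda-z)\big](A-\lambda)^{-1}=(\lambda-z)^{-1}(A-\lambda)(A-\lambda)^{-1}=(\lambda-z)^{-1}\cdot I$ applied to $m_z$, giving $(\lambda-z)^{-1}m_z$ on the left over and above $(A-\lambda)^{-1}m_z$. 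Matching coefficients of $m_z$ then yields exactly $b_z(\varphi,n_{\bar z})=(\lambda-z)^{-1}$, i.e. $(\lambda-z)b_z(\varphi,n_{\bar z})=1$.

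The relations \eqref{fx11+} for the adjoint are obtained by the same argument applied to $\tilde A^*=A^{\omega_2,\omega_1}$, using Proposition \ref{pp2} and the conjugate resolvent formula derived in the proof of Theorem \ref{t1} (namely $\bar b_z^{-1}=-(\bar\alpha^{-1}+\bar\tau+\langle\cdots\rangle)$), with the roles of $n_z$ and $m_z$ interchanged; the eigenvalue of $\tilde A^*$ is $\bar\lambda$ and the eigenvector $\psi$ satisfies $\tilde A^*\psi=\bar\lambda\psi$, so $\psi=(A-\bar z)(A-\bar\lambda)^{-1}n_{\bar z}$ and $(\bar\lambda-\bar z)\bar b_z(\psi,m_z)=1$. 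The main subtlety I anticipate is the bookkeeping around the case $b_z=\infty$: as remarked after Theorem \ref{t1}, $b_z=\infty$ happens precisely when $z\in\sigma_p(\tilde A)$, so one must either choose $z$ with $z\neq\lambda$ (ensuring $b_z$ finite and $(\varphi,n_{\bar z})\neq0$) or interpret the relation $(\lambda-z)b_z(\varphi,n_{\bar z})=1$ projectively. Making sure $\varphi\neq0$ forces $(\varphi,n_{\bar z})\neq0$, so that $n_{\bar z}$ genuinely detects the eigenvector, is the one point requiring care rather than routine computation.
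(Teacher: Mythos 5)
Your proposal is correct and follows essentially the same route as the paper's proof: apply the Krein-type formula (\ref{f10}) to the eigenvector relation $\tilde R_z\varphi=(\lambda-z)^{-1}\varphi$, rearrange to conclude $\varphi$ is (up to normalization) $(A-z)(A-\lambda)^{-1}m_z$, extract the scalar relation $(\lambda-z)b_z(\varphi,n_{\bar z})=1$, and treat $\tilde A^*\psi=\bar\lambda\psi$ analogously. Your explicit attention to the normalization of $\varphi$ and to why $(\varphi,n_{\bar z})\neq 0$ (which follows since otherwise $\varphi$ would be an eigenvector of $A$ itself) is slightly more careful than the paper's terse ``multiply by $n_{\bar z}$'' step, but it is the same argument.
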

\begin{proof} The proof follows from Theorem \ref{t1}.  Let $\tilde
  A\varphi=\lambda\varphi$, i.e.,
\[
\tilde R_z\varphi=R_z\varphi+b_z(\varphi,n_{\bar z})m_z=(\lambda-z)^{-1}\varphi.
\]
Hence, $ b_z(\varphi,n_{\bar z})m_z=[(\lambda-z)^{-1}-R_z]\varphi=
(\lambda-z)^{-1}(A-\lambda)(A-z)^{-1}\varphi, $
\begin{equation}\label{fx1}
(\lambda-z)b_z(\varphi,n_{\bar z})(A-z)(A-\lambda)^{-1}m_z=\varphi.
\end{equation}
Multiplying the last expression  by $n_{\bar z}$  we obtain that
\[
(\lambda-z)b_z(\varphi,n_{\bar z})\left((A-z)(A-\lambda)^{-1}m_z,n_{\bar z}\right)=(\varphi,n_{\bar z}),
\]
and, hence,
\begin{equation}\label{fx11}
(\lambda-z)b_z(\varphi,n_{\bar z})=1.
\end{equation}
We remark that  (\ref{fx1}) and (\ref{fx11}) gives  $\varphi=(A-z)(A-\lambda)^{-1}m_z$.
This proves (\ref{fx1+}).
Analogously, considering  $\tilde A^*\psi=\bar\lambda\psi$ we can prove (\ref{fx11+}).
\end{proof}
\begin{cor}
  Let us put $z=0$ in the case $\tilde A\in{\mathcal P}(A)$ under the
  conditions of Theorem \ref{t2}. Then (\ref{fx1+}) and (\ref{fx11+})
  have the form
\[
\lambda b_0(\varphi,n_0)=1, \quad \varphi=A(A-\lambda)^{-1}m_0; \quad
\bar\lambda \bar b_0(\psi,m_0)=1, \quad \psi=A(A-\bar\lambda)^{-1}n_0.
\]
\end{cor}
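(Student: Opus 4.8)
The plan is to obtain this corollary as a direct specialization of Theorem \ref{t2} at the point $z=0$, in exactly the same manner in which Corollary \ref{c1} was deduced from Theorem \ref{t1}. Relations (\ref{fx1+}) and (\ref{fx11+}) were established for every $z\in\rho(A)\cap\rho(\tilde A)$, so the only work is to verify that $z=0$ is an admissible choice and then to read off what each expression becomes there; no new computation is involved.

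First I would confirm admissibility. Since by hypothesis $A=A^*>1$ we have in particular $A>0$, hence $0\in\rho(A)$; and for $\tilde A\in\mathcal{P}(A)$ possessing an inverse (the very setting used in Corollary \ref{c1}) we have $0\in\rho(\tilde A)$, so that $0\in\rho(A)\cap\rho(\tilde A)$. I would also note that the new eigenvalue satisfies $\lambda\neq 0$: indeed $\lambda\in\sigma_p(\tilde A)$ while $0\in\rho(\tilde A)$, so $A-\lambda$ and $A-\bar\lambda$ are boundedly invertible and the right-hand expressions are well defined. Then I substitute $z=0$ (hence $\bar z=0$) into (\ref{fx1+}) and (\ref{fx11+}), using the $z=0$ values of the auxiliary data already recorded in (\ref{f19}), namely $n_{\bar z}=n_0=A^{-1}\omega_1$, $m_z=m_0=A^{-1}\omega_2$, and $b_z=b_0$ with $-b_0^{-1}=\alpha^{-1}+\langle\omega_2,A^{-1}\omega_1\rangle$. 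This turns (\ref{fx1+}) into $\lambda b_0(\varphi,n_0)=1$ together with $\varphi=A(A-\lambda)^{-1}m_0$, and (\ref{fx11+}) into $\bar\lambda\bar b_0(\psi,m_0)=1$ together with $\psi=A(A-\bar\lambda)^{-1}n_0$, which is precisely the asserted form.

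The only point requiring any care, and therefore the main (if modest) obstacle, is justifying that the passage to $z=0$ is legitimate: that $0$ lies in the common resolvent set, and that the hypothesis $\tilde A\in\mathcal{P}(A)$ keeps $b_0$ finite. The latter is exactly the condition $\vert(A^{1/2}\eta_2,A^{1/2}\eta_1)\vert<\infty$ of Definition \ref{d1}, which here reads $\vert\langle\omega_2,A^{-1}\omega_1\rangle\vert<\infty$, ensuring $-b_0^{-1}=\alpha^{-1}+\langle\omega_2,A^{-1}\omega_1\rangle$ is a finite scalar; equivalently, since $0\in\rho(\tilde A)$ we have $0\notin\sigma_p(\tilde A)$, so $b_0\neq\infty$ in the sense of the remark following Theorem \ref{t1}. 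Once this is secured the statement is a purely formal substitution.
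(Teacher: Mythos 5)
Your proposal is correct and matches the paper's (implicit) argument: the corollary is stated without proof precisely because it is the direct substitution $z=0$ into (\ref{fx1+}) and (\ref{fx11+}), using the $z=0$ data $n_0=A^{-1}\omega_1$, $m_0=A^{-1}\omega_2$, $b_0$ from (\ref{f19}). Your added checks that $0\in\rho(A)\cap\rho(\tilde A)$ (hence $\lambda\neq 0$ and $b_0\neq\infty$) and that $\tilde A\in{\mathcal P}(A)$ keeps $\langle\omega_2,A^{-1}\omega_1\rangle$ finite are exactly the implicit admissibility conditions, so nothing is missing.
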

\begin{prop}\label{p2}
Let the perturbed operator $\tilde A\in{\mathcal P}(A)$ possess a new eigenvalue
$\lambda\in{\mathbb C}$ in comparison with $A$ and let the eigenvectors be
$\varphi$ and $\psi$, i.e., $\tilde A\varphi=\lambda\varphi$ and $\tilde A^*\psi=\bar\lambda\psi$
then the relations (\ref{fx1}) and (\ref{fx11}) , in terms of $\omega_1, \omega_2$, have a form
\begin{equation}\label{fnd}
\alpha\langle(A-\lambda)^{-1}\omega_2,\omega_1\rangle=-1, \quad \varphi=(A-\lambda)^{-1}\omega_2;
\end{equation}
\begin{equation}\label{fnd+}
\bar\alpha\langle(A-\bar\lambda)^{-1}\omega_1,\omega_2\rangle=-1, \quad \psi=(A-\bar\lambda)^{-1}\omega_1.
\end{equation}
\end{prop}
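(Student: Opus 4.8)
The plan is to substitute the explicit identifications from Theorem~\ref{t1} — namely $m_z=(A-z)^{-1}\omega_2$ and $n_{\bar z}=(A-\bar z)^{-1}\omega_1$ of (\ref{f16}), together with the value of $b_z^{-1}$ — into the two relations (\ref{fx1}) and (\ref{fx11}) supplied by Theorem~\ref{t2}, and then to simplify, exploiting that every resolvent is a function of the selfadjoint operator $A$ and hence the relevant factors commute. I would first dispose of the eigenvector. Inserting $m_z=(A-z)^{-1}\omega_2$ into $\varphi=(A-z)(A-\lambda)^{-1}m_z$ and cancelling the commuting factors $(A-z)$ and $(A-z)^{-1}$ gives immediately $\varphi=(A-\lambda)^{-1}\omega_2$, the second assertion of (\ref{fnd}); the same cancellation applied to the relations for $\tilde A^*$ yields $\psi=(A-\bar\lambda)^{-1}\omega_1$ in (\ref{fnd+}).

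For the scalar identity I would begin from $(\lambda-z)b_z(\varphi,n_{\bar z})=1$. Writing the inner product as a dual pairing and using $A=A^*$ to transfer $(A-\bar z)^{-1}$ onto the left factor, one obtains
\[
(\varphi,n_{\bar z})=\langle (A-z)^{-1}(A-\lambda)^{-1}\omega_2,\omega_1\rangle .
\]
The Hilbert resolvent identity $(A-z)^{-1}(A-\lambda)^{-1}=(\lambda-z)^{-1}\big[(A-\lambda)^{-1}-(A-z)^{-1}\big]$ then converts $(\lambda-z)b_z(\varphi,n_{\bar z})=1$ into
\[
b_z\big[\langle(A-\lambda)^{-1}\omega_2,\omega_1\rangle-\langle(A-z)^{-1}\omega_2,\omega_1\rangle\big]=1 .
\]

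It remains to combine this with the explicit value of $b_z^{-1}$. Since $\tilde A\in{\mathcal P}(A)$, the finiteness hypothesis of part~I of Definition~\ref{d1} allows me, exactly as in the derivation of (\ref{f13}) and (\ref{f19}), to reduce the parametric expression (\ref{f17}) to $-b_z^{-1}=\alpha^{-1}+\langle(A-z)^{-1}\omega_2,\omega_1\rangle$. Substituting this, the $z$-dependent pairings cancel and leave $\langle(A-\lambda)^{-1}\omega_2,\omega_1\rangle=-\alpha^{-1}$, that is, $\alpha\langle(A-\lambda)^{-1}\omega_2,\omega_1\rangle=-1$, which is (\ref{fnd}). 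Running the identical argument for $\tilde A^*=A^{\omega_2,\omega_1}$ (Proposition~\ref{pp2}), with the roles of $\omega_1,\omega_2$ and of $\lambda,\bar\lambda$ interchanged, produces (\ref{fnd+}).

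The step I expect to be the main obstacle is this last one: making precise that the $z$-dependence genuinely cancels. This rests on two points — the reduction of the general $b_z^{-1}$ of (\ref{f17}) to its clean $z$-form, which relies on the algebraic simplification $A+(1+\bar z A)(A-\bar z)^{-1}=(A^2+1)(A-\bar z)^{-1}$ already exploited in Theorem~\ref{t1}, and the verification that the pairing $\langle(A-\lambda)^{-1}\omega_2,\omega_1\rangle$ is finite, which is guaranteed here by $\omega_i\in{\mathcal H}_{-2}$ together with the hypothesis $\tilde A\in{\mathcal P}(A)$.
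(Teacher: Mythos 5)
Your proposal is correct and takes essentially the same route as the paper: the paper's (two-line) proof is precisely to take $n_z$, $m_z$, $b_z$ as in (\ref{f13}) and substitute them into the relations of Theorem \ref{t2}, which is what you carry out in detail. Your added steps — the commutation/cancellation giving $\varphi=(A-\lambda)^{-1}\omega_2$, the resolvent identity, and the reduction of (\ref{f17}) to $-b_z^{-1}=\alpha^{-1}+\langle(A-z)^{-1}\omega_2,\omega_1\rangle$ in the ${\mathcal P}(A)$ case — are exactly the "corresponding calculations" the paper leaves implicit, and they check out.
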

\begin{proof} Taking $n_z$ and $m_z$ as in (\ref{f13}), we obtain
  (\ref{fnd}) and (\ref{fnd+}).  The second way to prove is the
  following.  Instead of (\ref{f10}) we take the representation
  $\tilde A=A+\alpha \langle \cdot,\omega_1\rangle\omega_2$ and
  conduct similar conversions with the corresponding calculations.
\end{proof}
\section{The inverse spectral problem for a rank one nonsymmetric
  singular perturbation}
If we regard the formulation of the Theorem \ref{t2} as a direct spectral
problem, then we can consider the following Theorem as a
corresponding inverse problem.
\begin{theorem}\label{t3}
For a given positive selfadjoint operator $A=A^*$ on a separable
Hilbert space ${\mathcal H}$ and $\lambda\in{\mathbb C}$ and
vectors $\varphi,\psi\in{\mathcal H}\setminus{\mathcal H}_{+1}$
($\varphi,\psi\in{\mathcal H}_{+1}\setminus{\mathcal H}_{+2}$),
there exist a unique
$\tilde A\in{\mathcal P}_{\tau}(A)$
($\tilde A\in{\mathcal P}(A)$) such that
$\tilde A\varphi=\lambda\varphi$ and
$\tilde A^*\psi=\bar\lambda\psi$.
Moreover,
the operator $\tilde A$ is defined by (\ref{f10}) as follows:
\begin{equation}\label{f88}
\tilde R_z=R_z+b_z(\cdot,n_{\bar z})m_z,
\end{equation}
with
\begin{equation}\label{f23}
m_{z}=(A-\lambda)(A-z)^{-1}\varphi, \quad n_{\bar z}=(A-\bar\lambda)(A-\bar z)^{-1}\psi
\end{equation}
and
\begin{equation}\label{f24}
b_z^{-1}=(\lambda-z)(\varphi,n_{\bar z}), \quad  \left( \bar b_z^{-1}=(\bar\lambda-\bar z)(\psi,m_{z}) \right).
\end{equation}
\end{theorem}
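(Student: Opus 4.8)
The plan is to run the direct problem (Theorem \ref{t2}) in reverse: the relations (\ref{f23}), (\ref{f24}) tell us exactly what the Krein data $m_z$, $n_z$, $b_z$ are forced to be, so I would \emph{define} them by those formulas and then check that they assemble, via (\ref{f88}), into the resolvent of an operator of the required class. First I would introduce the singular vectors $\omega_2:=(A-\lambda)\varphi$ and $\omega_1:=(A-\bar\lambda)\psi$. Since ${\bf A}$ maps $\mathcal H$ continuously into $\mathcal H_{-2}$, the hypothesis $\varphi,\psi\in\mathcal H\setminus\mathcal H_{+1}$ gives $\omega_1,\omega_2\in\mathcal H_{-2}\setminus\mathcal H_{-1}$, whereas $\varphi,\psi\in\mathcal H_{+1}\setminus\mathcal H_{+2}$ gives $\omega_1,\omega_2\in\mathcal H_{-1}\setminus\mathcal H$. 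By the regularity dictionary recorded at the end of the proof of Theorem \ref{t1}, these two alternatives are precisely what place $\tilde A$ in $\mathcal P_\tau(A)$ and in $\mathcal P(A)$ respectively, which is how the two parenthetical cases of the statement get discharged simultaneously. Setting $m_z:=R_z\omega_2$ and $n_z:=R_z\omega_1$ one has $m_z=(A-\lambda)(A-z)^{-1}\varphi$ and $n_{\bar z}=(A-\bar\lambda)(A-\bar z)^{-1}\psi$, i.e.\ (\ref{f23}); and since $(A-z)m_z=\omega_2$ is independent of $z$ (likewise for $n_z$), the compatibility (\ref{f11}) is automatic.

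Next I would define $b_z$ by (\ref{f24}) and verify the resolvent-compatibility (\ref{f12}). The crucial point is that $(\varphi,n_{\bar z})$ is an honest $\mathcal H$-inner product, as $n_{\bar z}\in\mathcal H$, so $b_z$ is well defined \emph{without} any renormalization. Using self-adjointness of $A$ I would rewrite $(\varphi,n_{\bar z})=(m_z,\psi)$, and then a short computation with the Hilbert identity $m_z-m_\xi=(z-\xi)R_z m_\xi$ together with the operator identity $I-(\lambda-z)R_z=R_z(A-\lambda)$ collapses $b_z^{-1}-b_\xi^{-1}$ to exactly $(\xi-z)(m_\xi,n_{\bar z})$, which is (\ref{f12}). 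Granting (\ref{f11}) and (\ref{f12}), the family $\tilde R_z$ of (\ref{f88}) satisfies the resolvent identity; it is moreover injective (if $\tilde R_z f=0$ then $f=-b_z(f,n_{\bar z})\omega_2\in\mathcal H_{-2}$, impossible for $0\neq f\in\mathcal H$ since $\omega_2\notin\mathcal H$), hence it is the resolvent of a uniquely determined closed operator $\tilde A$. Matching the $z$-independent part of $b_z^{-1}$ with the general expression (\ref{f13}) of Theorem \ref{t1} then reads off the parameter $\alpha$ (and, in the genuine $\mathcal H_{-2}$ case, the regularization $\tau$ standing for the divergent pairing $\langle\omega_2,A(A^2+1)^{-1}\omega_1\rangle$), exhibiting $\tilde A=A+\alpha\langle\cdot,\omega_1\rangle\omega_2\in\mathcal P_\tau(A)$.

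It then remains to confirm that $\varphi,\psi$ are the claimed eigenvectors. Applying (\ref{f88}) to $\varphi$, the very definition (\ref{f24}) yields $b_z(\varphi,n_{\bar z})=(\lambda-z)^{-1}$, while $m_z=\varphi-(\lambda-z)R_z\varphi$, so $\tilde R_z\varphi=R_z\varphi+(\lambda-z)^{-1}m_z=(\lambda-z)^{-1}\varphi$, i.e.\ $\tilde A\varphi=\lambda\varphi$; the identity $\tilde A^*\psi=\bar\lambda\psi$ follows in the same way from the conjugate formula in (\ref{f24}) and $\tilde A^*=A^{\omega_2,\omega_1}$ of Proposition \ref{pp2}. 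Uniqueness is then forced by the direct theorem: any $\tilde A'\in\mathcal P_\tau(A)$ with the same eigendata must, by (\ref{fx1+})--(\ref{fx11+}), have $m_z'=(A-\lambda)(A-z)^{-1}\varphi$, $n_{\bar z}'=(A-\bar\lambda)(A-\bar z)^{-1}\psi$ and $b_z'^{-1}=(\lambda-z)(\varphi,n_{\bar z})$, hence the same $\tilde R_z$ and the same operator.

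I expect the main obstacle to be not the algebra, which is a routine Hilbert-identity manipulation, but the bookkeeping of the two regularity regimes: confirming that the constructed $\tilde R_z$ really is the resolvent of a \emph{densely defined} operator of the form in Definition \ref{d1} in the genuine $\mathcal H_{-2}$ situation, where only the $\tau$-renormalized $b_z$ is meaningful and the split of the $z$-independent constant into $\alpha$ and $\tau$ must be made consistently. A secondary point to address is excluding the degenerate possibility $(\varphi,n_{\bar z})\equiv0$, which would make $b_z\equiv\infty$; this must be ruled out for admissible nonzero data $\varphi,\psi$ so that $\tilde A$ is a genuine rank-one perturbation distinct from $A$.
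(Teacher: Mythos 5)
Your proposal is correct and takes essentially the same route as the paper: the paper's (sketched) proof likewise takes the data (\ref{f23})--(\ref{f24}), verifies the compatibility conditions (\ref{f11})--(\ref{f12}), invokes the inverse-Krein-formula criterion (its Proposition \ref{p4}) to conclude that (\ref{f88}) is the resolvent of a rank-one singularly perturbed operator, then checks $\tilde A\varphi=\lambda\varphi$ by direct calculation and asserts uniqueness. Your write-up simply fills in what the paper leaves as a sketch --- the injectivity of $\tilde R_z$, the identification of $\alpha$ and $\tau$ by matching with (\ref{f13}), and a concrete uniqueness argument via Theorem \ref{t2} in place of the paper's unspecified ``by contradiction''.
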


In general, the proof of Theorem \ref{t3} does not differ from the
proof of the similar Theorem in \cite{VD} and hence we give  a
sketch of the proof there.

\begin{proof}
  Let us remark that if $\varphi,\psi\in{\mathcal H}\setminus{\mathcal
    H}_{+1}$, then there exists $\tilde A\in{\mathcal P}_{\tau}(A)$
  (possibly $\tilde A\in{\mathcal P}(A)$) and if
  $\varphi,\psi\in{\mathcal H}_{+1}\setminus{\mathcal H}_{+2}$, then
  there exists exactly $\tilde A\in{\mathcal P}(A)$.

  The proof needs a supplementary proposition that has a general
  character.  One of them is known from \cite{D}.
\begin{prop}\label{p3}
  Let there be given a (half-bounded) positive selfadjoint operator
  $A$ with  domain ${\mathfrak D}(A)$ in a separable Hilbert
  space ${\mathcal H}$.  Then for an arbitrary vector $\eta\in
  {\mathcal H}\setminus {\mathfrak D}(A)$ and an arbitrary number
  $z\in{\mathbb C}$, ${\rm Im}(z)\not=0$, there exists a restriction
  $\dot A$ of the operator $A$ such that $\eta =n_z$ is its defect
  vector namely $(\dot A-z)^{*}n_z=0$.
\end{prop}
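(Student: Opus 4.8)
The plan is to realize $\dot A$ as a one-codimensional restriction of $A$ cut out by a single linear constraint that encodes orthogonality to $\eta$. Since $\mathrm{Im}(z)\neq0$ and $A$ is positive selfadjoint, $z\in\rho(A)$, so $A-z$ is a bijection of $\mathfrak{D}(A)$ onto $\mathcal{H}$. I would introduce the linear functional $\ell(\varphi)=((A-z)\varphi,\eta)$ on $\mathfrak{D}(A)$ and set $\mathfrak{D}=\ker\ell$, $\dot A:=A\restriction\mathfrak{D}$. Being a restriction of the selfadjoint (hence symmetric) operator $A$, the operator $\dot A$ is automatically symmetric, so the only two things left to check are that $\dot A$ is densely defined and that $\eta=n_z$ is its defect vector at $z$.

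The delicate point — and the only place where the hypothesis $\eta\in\mathcal{H}\setminus\mathfrak{D}(A)$ is really used — is the density of $\mathfrak{D}$ in $\mathcal{H}$, which must be secured first so that $(\dot A)^*$ and $(\dot A-z)^*$ make sense. I would argue that $\ell$ is a \emph{discontinuous} functional on $\mathfrak{D}(A)$: if $\ell$ were bounded in the $\mathcal{H}$-norm it would extend to $(\cdot,h)$ for some $h\in\mathcal{H}$, and then $(A\varphi,\eta)=(\varphi,h+\bar z\eta)$ for all $\varphi\in\mathfrak{D}(A)$ would force $\eta\in\mathfrak{D}(A^*)=\mathfrak{D}(A)$, contradicting the hypothesis. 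A discontinuous linear functional has dense kernel, so $\mathfrak{D}=\ker\ell$ is dense in $\mathfrak{D}(A)$, and therefore in $\mathcal{H}$; thus $\dot A$ is densely defined. This step is the main obstacle: everything else is routine, whereas here one must exploit precisely that $\eta$ fails to lie in $\mathfrak{D}(A)$ to guarantee that the single constraint does not collapse the domain to a non-dense (closed, codimension-one) subspace.

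For the defect-vector claim, note that by the very definition of $\mathfrak{D}$ one has $((A-z)\varphi,\eta)=0$ for all $\varphi\in\mathfrak{D}$, i.e. $\eta\perp\mathrm{Ran}(\dot A-z)$. Since $\ker((\dot A-z)^*)=\mathrm{Ran}(\dot A-z)^{\perp}$, this gives $(\dot A-z)^*\eta=0$, which is exactly the required relation with $n_z=\eta$. Moreover, because $A-z$ maps $\mathfrak{D}(A)$ bijectively onto $\mathcal{H}$, it maps $\mathfrak{D}$ onto the closed hyperplane $\{\eta\}^{\perp}$; hence $\mathrm{Ran}(\dot A-z)=\{\eta\}^{\perp}$ and the associated deficiency subspace is exactly the one-dimensional span of $\eta$. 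Consequently $\dot A$ is a genuine (proper) symmetric restriction of $A$ carrying $\eta=n_z$ as its single defect vector, which is the assertion of the proposition.
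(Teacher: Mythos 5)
Your proof is correct, but there is in fact nothing in the paper to compare it with: the authors do not prove Proposition~\ref{p3} at all --- it is quoted as ``known from'' \cite{D} and then immediately applied to $\dot A$ and $\dot A_*$. Your argument supplies precisely the content the paper delegates to that citation, and it is the standard construction in this literature: restricting $A$ to the kernel of $\ell(\varphi)=((A-z)\varphi,\eta)$ is the same as restricting to $\{\varphi\in{\mathfrak D}(A)\ \vert\ \langle\varphi,\omega\rangle=0\}$ with $\omega=({\bf A}-\bar z)\eta\in{\mathcal H}_{-2}\setminus{\mathcal H}$, which is how rank-one restrictions are usually parametrized in the $A$-scale. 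All three of your steps are sound: (i) boundedness of $\ell$ in the ${\mathcal H}$-norm would, via Riesz representation, give $(A\varphi,\eta)=(\varphi,h+\bar z\eta)$ for all $\varphi\in{\mathfrak D}(A)$ and hence $\eta\in{\mathfrak D}(A^*)={\mathfrak D}(A)$, so $\ell$ is discontinuous exactly because $\eta\notin{\mathfrak D}(A)$, and a discontinuous functional has dense kernel, making $\dot A$ densely defined (and symmetric, as a restriction of $A=A^*$); (ii) $\eta\perp{\mathrm{Ran}}(\dot A-z)$ by construction, and ${\mathrm{Ran}}(\dot A-z)^{\perp}=\ker\bigl((\dot A-z)^*\bigr)$ gives $(\dot A-z)^*\eta=0$, which is the assertion with $n_z=\eta$; (iii) bijectivity of $A-z:{\mathfrak D}(A)\to{\mathcal H}$ (valid since ${\mathrm{Im}}(z)\not=0$ and $A=A^*$) shows ${\mathrm{Ran}}(\dot A-z)=\{\eta\}^{\perp}$, so the defect subspace at $z$ is exactly ${\mathrm{span}}\{\eta\}$. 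You also correctly isolate the density of $\ker\ell$ as the one point where the hypothesis $\eta\in{\mathcal H}\setminus{\mathfrak D}(A)$ is essential; this is the step a careless argument would gloss over, since for $\eta\in{\mathfrak D}(A)$ the same constraint cuts out a closed, non-dense hyperplane and the adjoint $(\dot A-z)^*$ would not even be defined.
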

This proposition is used for $\dot A$ and $\dot A_*$.

Next we use the result which, in some sense, is inverse to the one
pointed out in Theorem \ref{t1}, --- that is a formula similar to
M.~Krein's formula but from the perturbation point of view, i.e., the
perturbation of the resolvent of the selfadjoint operator by a
one-dimensional skew projection.
\begin{prop}\label{p4}
  Let there be given a positive selfadjoint operator $A$ on a
  separable Hilbert space ${\mathcal H}$.  The operator-valued
  function
\begin{equation}\label{f25}
\tilde R_z:=(A-z)^{-1}+b_z(\cdot ,n_{\bar z})m_z
\end{equation}
\begin{itemize}
\item[{\bf 1)}] is a resolvent of a closed operator, if $n_z$  and $b_z$ satisfy conditions
(\ref{f11}) and (\ref{f12}) and
$n_{\bar z}$, ($m_{\bar z}$) is not an eigenvector of $A-z$, ($A-\bar z$);
\item[{\bf 2)}] is the resolvent of a rank one singularly perturbed
  operator iff, additionally to the previous assumption, the
  inclusions $n_z,m_z\in{\mathcal H}\setminus{\mathcal H}_{+2}$ hold
  true.
\end{itemize}
\end{prop}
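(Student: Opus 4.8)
The plan is to recognise the right-hand side of (\ref{f25}) as a \emph{pseudoresolvent} and then to apply the standard criterion telling when such a family is a genuine resolvent. Recall that a family $\{\tilde R_z\}$ of bounded operators satisfying the resolvent identity $\tilde R_z-\tilde R_\xi=(z-\xi)\tilde R_z\tilde R_\xi$ is the resolvent $(\tilde A-z)^{-1}$ of a uniquely determined closed operator $\tilde A$ as soon as ${\rm ker}\,\tilde R_z=\{0\}$; it is the resolvent of a \emph{densely defined} closed operator exactly when in addition ${\rm ran}\,\tilde R_z$ is dense, in which case $\tilde A=z+\tilde R_z^{-1}$ on ${\rm ran}\,\tilde R_z$, independently of $z$. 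Thus the proof of {\bf 1)} splits into verifying the resolvent identity for (\ref{f25}) and checking that $\tilde R_z$ is injective with dense range.

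First I would verify the resolvent identity, and this is exactly where (\ref{f11}) and (\ref{f12}) enter. From (\ref{f11}) one reads off the telescoping relations
\[
R_z m_\xi=\frac{m_z-m_\xi}{z-\xi},\qquad R_{\bar\xi}n_{\bar z}=\frac{n_{\bar\xi}-n_{\bar z}}{\bar\xi-\bar z},
\]
the second of which, together with $R_z^{*}=R_{\bar z}$, gives $(R_\xi f,n_{\bar z})=\bigl[(f,n_{\bar\xi})-(f,n_{\bar z})\bigr]/(\xi-z)$. Substituting these into $\tilde R_z-\tilde R_\xi-(z-\xi)\tilde R_z\tilde R_\xi$ and using the resolvent identity for the unperturbed $R_z$, the vector-valued terms cancel in pairs and the whole expression collapses to the scalar $\bigl[b_z^{-1}-b_\xi^{-1}-(\xi-z)(m_\xi,n_{\bar z})\bigr]$ multiplying $(f,n_{\bar\xi})\,m_z$. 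By (\ref{f12}) this scalar is zero; in fact the resolvent identity holds \emph{if and only if} (\ref{f12}) does. This is precisely the computation in the proof of Theorem \ref{t1} run backwards.

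Next I would treat injectivity and dense range. If $\tilde R_z f=0$, then $R_z f=-b_z(f,n_{\bar z})m_z$, so either $b_z(f,n_{\bar z})=0$, giving $R_z f=0$ and hence $f=0$, or else $m_z\in{\rm ran}\,R_z={\mathfrak D}(A)$ and $f$ is a multiple of $(A-z)m_z$ tied to the single scalar relation $b_z\bigl((A-z)m_z,n_{\bar z}\bigr)=-1$. The non-eigenvector hypothesis of {\bf 1)} is what excludes this last degeneration, so ${\rm ker}\,\tilde R_z=\{0\}$; under the hypothesis of {\bf 2)} the alternative $m_z\in{\mathfrak D}(A)={\mathcal H}_{+2}$ cannot occur at all. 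Dually, $g\perp{\rm ran}\,\tilde R_z$ leads to $R_{\bar z}g=-\overline{b_z(m_z,g)}\,n_{\bar z}$, and the same alternative applied to $n_{\bar z}$ forces $g=0$. With trivial kernel and dense range the pseudoresolvent criterion furnishes a unique closed, densely defined $\tilde A$ with $\tilde R_z=(\tilde A-z)^{-1}$, which proves {\bf 1)}.

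For {\bf 2)} the inclusions $n_z,m_z\in{\mathcal H}\setminus{\mathcal H}_{+2}$ make both alternatives of the previous step vacuous, so $\tilde A$ is automatically closed and densely defined. To upgrade ``closed operator'' to ``rank one \emph{singular} perturbation'' I would exhibit the symmetric restriction $\dot A=A\restriction{\mathfrak D}$ on ${\mathfrak D}=\{f\in{\mathfrak D}(A)\cap{\mathfrak D}(\tilde A):Af=\tilde Af\}$ of Remark 3, note that $m_z,n_{\bar z}\notin{\mathcal H}_{+2}$ pushes the defect elements outside ${\mathfrak D}(A)$ --- so that $\dot A\neq A$ and the perturbation is singular rather than bounded --- and invoke Proposition \ref{p3} to realise $n_z$ and $m_{\bar z}$ as genuine defect vectors, giving deficiency index one. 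Conversely, if $\tilde R_z$ is already the resolvent of a rank one singular perturbation, then Theorem \ref{t1} applies and represents $n_z=R_z\omega_1$, $m_z=R_z\omega_2$ with $\omega_1,\omega_2\in{\mathcal H}_{-2}\setminus{\mathcal H}$, whence $n_z,m_z\in{\mathcal H}\setminus{\mathcal H}_{+2}$; this is the remaining implication. I expect the real obstacle to lie not in the algebra but in this last identification: showing that the abstract closed operator produced by the pseudoresolvent criterion genuinely belongs to ${\mathcal P}_\tau(A)$ in the sense of Definition \ref{d1}, i.e.\ that ``$\notin{\mathcal H}_{+2}$'' is exactly the dividing line between singular and regular rank one perturbations and that the reconstructed defect subspaces are one-dimensional and of the form dictated by (\ref{f11}).
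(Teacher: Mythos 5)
Your proposal is correct and follows essentially the same route as the paper's own (sketched) proof: both rest on Kato's pseudoresolvent criterion (Theorem 7.7.1 of \cite{Ka}) --- verify the Hilbert identity for (\ref{f25}) using (\ref{f11}) and (\ref{f12}), check triviality of the kernel, and obtain part {\bf 2)} from the inclusions $n_z,m_z\in{\mathcal H}\setminus{\mathfrak D}(A)$. Your write-up merely fills in details the paper leaves implicit (the telescoping computation, the dense-range check, and the converse implication via Theorem \ref{t1}).
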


Let us give also a sketch of the proof.  The part {\bf 1}) is verified
due to Theorem 7.7.1 \cite{Ka} as follows: $\tilde R_z$ is the
resolvent of a closed operator iff
\begin{itemize}
\item[{\bf a)}] $\tilde R_z$ satisfies the Hilbert identity
$
\tilde R_z-\tilde R_{\xi}=(z-\xi)\tilde R_z\tilde R_{\xi}, \ {\rm Im} z,\xi\not=0;
$
\item[{\bf b)}] $\tilde R_z$  has the trivial kernel,
$
\ker (\tilde R_z)=\{0\}, \ {\rm Im} z\not=0.
$
\end{itemize}

The part {\bf a)} is verified by substituting (\ref{f25}) into the
Hilbert identity for the resolvent.  The condition {\bf b)} is
verified by directly checking (\ref{f25}) for the vectors $f\perp
n_{\bar z}$ and $n_{\bar z}$.  The condition {\bf 2)} follows from the
fact that $n_z,m_z\in{\mathcal H}\setminus{\mathfrak D}(A)$.

We now return to the sketch of the proof of Theorem \ref{t3}.  Taking
$n_z,m_z$ in the form (\ref{f23}) and (\ref{f24}) we will check
identity (\ref{f12}).  Since the vectors $n_z$ and $m_z$ belong to
${\mathcal H}\setminus{\mathcal H}_{+2}$, by Proposition \ref{p4},
i.e., by part {\bf 2}), the operator $\tilde A$ is singularly
perturbed with respect of $A$.  The identity $\tilde A\varphi
=\lambda\varphi$ is checked by direct calculation.  The uniqueness is
proved by contradiction.
\end{proof}
\begin{prop}
  For a given selfadjoint operator $A$ on a separable Hilbert space
  ${\mathcal H}$, and $\lambda\in{\mathbb C}$, and vectors
  $\varphi,\psi\in{\mathcal H}_{+1}\setminus{\mathcal H}_{+2}$, there
  exist a unique $\tilde A\in{\mathcal P}(A)$ such that $\tilde
  A\varphi=\lambda\varphi$, $\tilde A^*\psi=\bar\lambda\psi$.
  Moreover, the operator $\tilde A$ is defined by the expression $
  \tilde A=A+\alpha\langle\cdot,\omega_1\rangle\omega_2, $ where $
  \omega_1=(A-\bar\lambda)\psi, \quad \omega_2=(A-\lambda)\varphi, $
  and $ \alpha^{-1}=-\langle(A-\lambda)\omega_2,\omega_1\rangle$, or $
  \alpha^{-1}=-\langle(A-\bar\lambda)\omega_1,\omega_2\rangle.  $
\end{prop}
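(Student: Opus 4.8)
The plan is to read off this Proposition from Theorem \ref{t3}, which already solves the inverse problem at the level of resolvents, and then to translate the resolvent data into the explicit operator expression $A+\alpha\langle\cdot,\omega_1\rangle\omega_2$ via Theorem \ref{t1} and the eigenvalue relations of Proposition \ref{p2}. First I would set $\omega_1=(A-\bar\lambda)\psi$ and $\omega_2=(A-\lambda)\varphi$ and locate these vectors in the $A$-scale. Since ${\bf A}$ acts boundedly from ${\mathcal H}_{+1}$ into ${\mathcal H}_{-1}$ and $\varphi,\psi\in{\mathcal H}_{+1}\setminus{\mathcal H}_{+2}={\mathcal H}_{+1}\setminus{\mathfrak D}(A)$, one gets $\omega_1,\omega_2\in{\mathcal H}_{-1}\setminus{\mathcal H}$: were $\omega_2=(A-\lambda)\varphi\in{\mathcal H}$, then $A\varphi\in{\mathcal H}$ and $\varphi\in{\mathfrak D}(A)$, a contradiction. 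Consequently $\eta_i={\bf A}^{-1}\omega_i\in{\mathcal H}_{+1}={\mathfrak D}(A^{1/2})$, so $(A^{1/2}\eta_2,A^{1/2}\eta_1)$ exists and is finite; by Part I of Definition \ref{d1} the operator being constructed lies in ${\mathcal P}(A)$, not merely in ${\mathcal P}_{\tau}(A)$. This added regularity is exactly what forces the sharper conclusion here as opposed to the ${\mathcal H}\setminus{\mathcal H}_{+1}$ case.

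Next I would invoke Theorem \ref{t3}: because $\varphi,\psi\in{\mathcal H}_{+1}\setminus{\mathcal H}_{+2}$, there is a unique $\tilde A\in{\mathcal P}(A)$ whose resolvent is (\ref{f88}) with $m_z,n_{\bar z}$ as in (\ref{f23}) and $b_z$ as in (\ref{f24}), satisfying $\tilde A\varphi=\lambda\varphi$ and $\tilde A^*\psi=\bar\lambda\psi$. It remains only to identify this $\tilde A$ with $A+\alpha\langle\cdot,\omega_1\rangle\omega_2$. To that end I would compare the resolvent furnished by Theorem \ref{t3} with the Krein-type resolvent of Theorem \ref{t1}. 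Rewriting $m_z=(A-\lambda)(A-z)^{-1}\varphi=(A-z)^{-1}(A-\lambda)\varphi=R_z\omega_2$ and likewise $n_{\bar z}=(A-\bar\lambda)(A-\bar z)^{-1}\psi=R_{\bar z}\omega_1$, one sees that the defect vectors (\ref{f23}) are precisely the vectors $n_z=R_z\omega_1$, $m_z=R_z\omega_2$ of (\ref{f13}) for the present $\omega_1,\omega_2$. Since a closed operator is determined by its resolvent, $\tilde A$ must be of the asserted form, the pair $(\omega_1,\omega_2)$ being fixed up to the scalar freedom recorded in Proposition \ref{pp1}.

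Finally the coupling constant is read off from the eigenvalue relations already established. Applying (\ref{fnd}) to the new eigenvalue $\lambda$ gives $\alpha\langle(A-\lambda)^{-1}\omega_2,\omega_1\rangle=-1$, that is, the first formula for $\alpha^{-1}$; applying the companion relation (\ref{fnd+}) to $\tilde A^*$ at $\bar\lambda$ yields the second one. Their agreement is the routine identity $\langle\varphi,\omega_1\rangle=\langle(A-\lambda)\varphi,\psi\rangle=\langle\omega_2,\psi\rangle$, which uses only the self-adjointness of $A$ carried over to the dual pairing between ${\mathcal H}_{+1}$ and ${\mathcal H}_{-1}$. The one point genuinely requiring care, and the main obstacle, is to guarantee that $\alpha$ is a well-defined nonzero number, i.e. that $\langle(A-\lambda)^{-1}\omega_2,\omega_1\rangle=\langle\varphi,\omega_1\rangle\neq0$; this presupposes $\lambda\notin\sigma_p(A)$ (so that $(A-\lambda)^{-1}\omega_2=\varphi$) together with the non-degeneracy $\langle\omega_2,\psi\rangle\neq0$ that makes $\lambda$ a genuinely new eigenvalue of $\tilde A$. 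Once this is secured, existence is the construction just described and uniqueness is inherited directly from the uniqueness clause of Theorem \ref{t3}.
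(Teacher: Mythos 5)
Your proposal is correct and takes essentially the same route as the paper, whose entire proof is the single line that the statement ``is an implication of Theorems \ref{t1} and \ref{t3}'': you have simply supplied the omitted details — locating $\omega_1=(A-\bar\lambda)\psi$, $\omega_2=(A-\lambda)\varphi$ in ${\mathcal H}_{-1}\setminus{\mathcal H}$ to land in ${\mathcal P}(A)$, matching the defect vectors (\ref{f23}) with $R_z\omega_1$, $R_z\omega_2$ from (\ref{f13}), and extracting $\alpha$ from (\ref{fnd}), (\ref{fnd+}). Two of your refinements are worth keeping: your reading of the coupling constant as $\alpha^{-1}=-\langle(A-\lambda)^{-1}\omega_2,\omega_1\rangle=-\langle\varphi,\omega_1\rangle$ (consistent with Remark 4; the printed statement evidently drops the inverse), and your observation that one must assume the nondegeneracy $\langle\varphi,\omega_1\rangle\not=0$, a point the paper leaves implicit.
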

\begin{proof} The corresponding proof is an implication of Theorems
  \ref{t1} and \ref{t3}.
\end{proof}
\section{A dual pair of eigenvalues}
Since $\tilde A\in{\mathcal P}_{\tau}(A)$ is a non-self-adjoint
operator, the definition of a dual pair of eigenvalues is different from
\cite{ADK}.
\begin{deff}
  A couple of numbers $\lambda,\mu\in{\mathbb C}$ is called a dual
  pair of eigenvalues of a singularly perturbed operator $\tilde
  A\in{\mathcal P}_{\tau}(A)$ iff
\begin{align} \label{f*}
&\tilde A \varphi_{\lambda}=\lambda\varphi_{\lambda},  \quad \tilde A\varphi_{\mu}=\mu\varphi_{\mu}, \\
&\tilde A^* \psi_{\bar\lambda}=\bar\lambda\psi_{\bar\lambda}, \quad  \tilde A^* \psi_{\bar\mu}=\bar\mu\psi_{\bar\mu}, \label{f*2} \\
&(\bar\lambda-\bar\mu)((A-\mu)^{-1}\varphi_{\lambda}, \psi_{\bar\lambda})=(\varphi_{\lambda},\psi_{\bar \lambda}).\label{f*3}
\end{align}
\end{deff}
The next theorem describes a method how to construct an operator with
a dual pair.

\begin{theorem}\label{t7}
  Let $A=A^*\geq c$ be a semibounded selfadjoint operator defined on
  ${\mathfrak D}(A)$ in a separable Hilbert space ${\mathcal
    H}$. For an arbitrary $\mu\in \rho (A)$ and vectors
  $\varphi_{\lambda}, \psi_{\bar\lambda}\in {\mathcal H}\setminus
  {\mathcal H}_{+2}$ there exists a unique nonsymmetric singularly
  perturbed operator $\tilde A \in {\mathcal P}_{\tau}(A)$ such that
  $(\mu, \lambda)$ is a dual pair, where
  $\bar\lambda:=\bar\mu+\frac{(\varphi_{\lambda},\psi_{\bar\lambda})}{((A-\mu)^{-1}\varphi_{\lambda},\psi_{\bar\lambda})}$
  is an eigenvalue with the eigenvector
  $\varphi_{\mu}=(A-\lambda)(A-\mu)^{-1}\varphi_{\lambda}$.  The
  adjoint operator $\tilde A^*\in{\mathcal P}_{\tau}(A)$ has also
  eigenvectors $\psi_{\bar\lambda}$ and
  $\psi_{\bar\mu}=(A-\bar\lambda)(A-\bar{\mu})\psi_{\bar{\lambda}}$.
\end{theorem}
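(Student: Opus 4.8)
The plan is to reduce the statement to the inverse spectral construction of Theorem \ref{t3} combined with the eigenvalue criterion of Theorem \ref{t2} (equivalently Proposition \ref{p2}). First I would feed the data into Theorem \ref{t3}: given $A$, the number $\lambda$ determined by the displayed prescription for $\bar\lambda$, and the two vectors $\varphi_{\lambda},\psi_{\bar\lambda}\in{\mathcal H}\setminus{\mathcal H}_{+2}$, Theorem \ref{t3} produces a \emph{unique} $\tilde A\in{\mathcal P}_{\tau}(A)$ with $\tilde A\varphi_{\lambda}=\lambda\varphi_{\lambda}$ and $\tilde A^{*}\psi_{\bar\lambda}=\bar\lambda\psi_{\bar\lambda}$, together with the explicit resolvent data $m_z=(A-\lambda)(A-z)^{-1}\varphi_{\lambda}$, $n_{\bar z}=(A-\bar\lambda)(A-\bar z)^{-1}\psi_{\bar\lambda}$ and $b_z^{-1}=(\lambda-z)(\varphi_{\lambda},n_{\bar z})$ from (\ref{f23})--(\ref{f24}). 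This already secures the first eigenrelation in (\ref{f*}) and the first in (\ref{f*2}); the whole problem is thus to show that this one operator $\tilde A$ simultaneously carries the second eigenpair $(\mu,\varphi_{\mu})$ and its adjoint counterpart, and that $(\mu,\lambda)$ then satisfies (\ref{f*3}).

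The second step is to test $\mu$ against the criterion (\ref{fx1+}) of Theorem \ref{t2} applied to the operator just built. Substituting the above $m_z$ into $\varphi_{\mu}=(A-z)(A-\mu)^{-1}m_z$, the parameter $z$ and the factor $(A-z)^{-1}$ cancel (all operators being functions of $A$), leaving exactly $\varphi_{\mu}=(A-\lambda)(A-\mu)^{-1}\varphi_{\lambda}$, the eigenvector claimed in the theorem. It then remains to verify the scalar condition $(\mu-z)b_z(\varphi_{\mu},n_{\bar z})=1$, i.e. $(\mu-z)(\varphi_{\mu},n_{\bar z})=(\lambda-z)(\varphi_{\lambda},n_{\bar z})$. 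Inserting $n_{\bar z}=(A-\bar\lambda)(A-\bar z)^{-1}\psi_{\bar\lambda}$ and moving the self-adjoint factors across the inner product, this identity collapses to a $z$-independent relation among $\varphi_{\lambda}$, $\psi_{\bar\lambda}$ and $(A-\mu)^{-1}$, which is precisely (\ref{f*3}) and hence precisely the prescription for $\bar\lambda$ in the hypothesis. In other words, the value of $\lambda$ has been chosen exactly so that $\mu$ becomes a new eigenvalue of $\tilde A$; this is the conceptual heart of the argument. A cleaner route to the same collapse is through Proposition \ref{p2}: both $\lambda$ and $\mu$ must annihilate the characteristic function $\zeta\mapsto 1+\alpha\langle(A-\zeta)^{-1}\omega_2,\omega_1\rangle$, and subtracting the two instances, applying the Hilbert resolvent identity, and using $\omega_2=(A-\lambda)\varphi_{\lambda}$, $\omega_1=(A-\bar\lambda)\psi_{\bar\lambda}$ reduces the difference to $(\varphi_{\mu},\psi_{\bar\lambda})=0$, which is again (\ref{f*3}).

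Third, I would obtain the adjoint half by symmetry: since $\tilde A^{*}=A^{\omega_2,\omega_1}$ by Proposition \ref{pp2}, running the identical argument for $\tilde A^{*}$ (or invoking the dual relations (\ref{fx11+})) yields $\bar\mu\in\sigma_p(\tilde A^{*})$ with the natural dual eigenvector $\psi_{\bar\mu}=(A-\bar\lambda)(A-\bar\mu)^{-1}\psi_{\bar\lambda}$, and the corresponding scalar test reproduces the conjugate of (\ref{f*3}). Finally, uniqueness is inherited directly from Theorem \ref{t3}: the operator is already pinned down by the single eigenpair $(\lambda,\varphi_{\lambda},\psi_{\bar\lambda})$, and once $\tilde A$ is fixed the second eigenpair is forced, so no freedom remains.

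I expect the main obstacle to be exactly the reduction in the second step, namely verifying that the single scalar condition from Theorem \ref{t2} for the parameter $\mu$ is identically the hypothesis defining $\lambda$, with all conjugations and the placement of $(A-\mu)^{-1}$ versus $(A-\bar\mu)^{-1}$ tracked correctly through the sesquilinear pairing. Alongside this, two admissibility points must be checked rather than assumed: that the denominator $((A-\mu)^{-1}\varphi_{\lambda},\psi_{\bar\lambda})$ is nonzero (so that $\lambda$ is well defined and $\lambda\neq\mu$), and that the produced eigenvectors $\varphi_{\mu},\psi_{\bar\mu}$ again lie in ${\mathcal H}\setminus{\mathcal H}_{+2}$ with $\mu\notin\sigma_p(A)$, so that $\mu$ is genuinely a new eigenvalue and Theorem \ref{t2} is applicable; here the hypotheses $\mu\in\rho(A)$ and $\varphi_{\lambda},\psi_{\bar\lambda}\in{\mathcal H}\setminus{\mathcal H}_{+2}$ enter.
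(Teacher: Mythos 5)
Your strategy is the right one and is, in substance, the paper's own proof written out: the paper disposes of the theorem in one sentence (``direct verification of (\ref{f*}) and (\ref{f*2}), since (\ref{f*3}) is the condition of the Theorem''), and your route --- existence and uniqueness of $\tilde A$ from Theorem \ref{t3} applied to $(\lambda,\varphi_{\lambda},\psi_{\bar\lambda})$, the scalar test of Theorem \ref{t2} at $\mu$ (used in its converse direction, which is the same computation read backwards), and the adjoint half via Proposition \ref{pp2} --- is exactly what that verification amounts to.

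However, the step you yourself single out as the heart of the argument does not close the way you assert, and the failure is concrete: it is a conjugation mismatch. Carry out the collapse with the paper's inner product, which is conjugate-linear in the \emph{second} slot (see the proof of Proposition \ref{pp2}). With $m_z=(A-\lambda)(A-z)^{-1}\varphi_{\lambda}$, $n_{\bar z}=(A-\bar\lambda)(A-\bar z)^{-1}\psi_{\bar\lambda}$, $b_z^{-1}=(\lambda-z)(\varphi_{\lambda},n_{\bar z})$, move $(A-\bar\lambda)(A-\bar z)^{-1}$ across the pairing as $(A-\lambda)(A-z)^{-1}$ and use $(\mu-z)(A-\lambda)-(\lambda-z)(A-\mu)=(\mu-\lambda)(A-z)$; the condition $(\mu-z)b_z(\varphi_{\mu},n_{\bar z})=1$ then reduces to $(\varphi_{\mu},\psi_{\bar\lambda})=0$ (your own ``cleaner route'' through Proposition \ref{p2} gives the same orthogonality), which expands to
\begin{equation*}
(\varphi_{\lambda},\psi_{\bar\lambda})=(\lambda-\mu)\,\bigl((A-\mu)^{-1}\varphi_{\lambda},\psi_{\bar\lambda}\bigr),
\end{equation*}
with the factor $\lambda-\mu$ \emph{unconjugated}; the adjoint test $(\bar\mu-\bar z)\bar b_z(\psi_{\bar\mu},m_z)=1$ reduces to this same relation, not to its conjugate. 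By contrast, the theorem's hypothesis --- equivalently (\ref{f*3}) as printed --- fixes $\bar\lambda-\bar\mu=(\varphi_{\lambda},\psi_{\bar\lambda})/((A-\mu)^{-1}\varphi_{\lambda},\psi_{\bar\lambda})$, i.e.\ it makes $\lambda-\mu$ the \emph{conjugate} of that quotient. The two prescriptions agree only when the quotient is real (as in Corollary \ref{c8} and in all of the paper's examples). So for genuinely complex data, the $\lambda$ prescribed in the statement, fed into Theorem \ref{t3}, yields an operator for which the scalar test at $\mu$ fails, and $\mu$ is then \emph{not} an eigenvalue; your assertion that the collapse is ``precisely (\ref{f*3})'' and ``precisely the prescription for $\bar\lambda$'' is therefore false as stated. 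What your computation actually establishes is the corrected statement with $\lambda:=\mu+(\varphi_{\lambda},\psi_{\bar\lambda})/((A-\mu)^{-1}\varphi_{\lambda},\psi_{\bar\lambda})$ (no bars, and correspondingly $(\lambda-\mu)$ in (\ref{f*3})). The slip originates in the paper, but a proof has to detect and resolve it rather than declare the identification. Your remaining admissibility checks (nonvanishing denominator, $\varphi_{\mu},\psi_{\bar\mu}\in{\mathcal H}\setminus{\mathcal H}_{+2}$, $\mu\in\rho(A)$) are the right ones and are fine.
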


{\bf Remark 4}.
If $\tilde A\in{\mathcal P}(A)$, then in the form
$\tilde A=A+\alpha\langle\cdot,\omega_1\rangle\omega_2$, we can calculate  the
coupling constant $\alpha=-\frac{1}{(\varphi_{\lambda}, \omega_1)}$ (or
$\bar\alpha=-\frac{1}{(\psi_{\bar\lambda},\omega_2)}$) and the corresponding vectors
\begin{equation}\label{eq5}
\omega_2=(A-\mu)\varphi_{\lambda}-\frac{(\psi_{\bar{\lambda}}, \varphi_{\lambda})}{((A-\mu)\psi_{\bar{\lambda}},
\varphi_{\lambda})}\varphi_{\lambda}, \quad \omega_1=(A-\bar\mu)\psi_{\bar{\lambda}}-\frac{(\varphi_{\lambda},
\psi_{\bar\lambda})}{((A-\mu)\varphi_{\lambda}, \psi_{\bar\lambda})}\psi_{\bar{\lambda}}.
\end{equation}

\begin{proof} 
  The proof is given by a direct verification of only (\ref{f*}) and
  (\ref{f*2}, since (\ref{f*3} is the condition of
  Theorem. \end{proof}

For a real dual pair $\lambda,\mu\in{\mathbb R}$ we have the
following corollary that follows from Theorem \ref{t7}.
\begin{cor}\label{c8}
  Let $A=A^*\geq 0$ be a positive selfadjoint operator defined on
  ${\mathfrak D}(A)$ in a separable Hilbert space ${\mathcal H}$, so
  that $\sigma(A)=\sigma_c(A)=[0,\infty)$. For an arbitrary number
  $\mu<0$ and vectors $\varphi_{\lambda},\psi_{\bar\lambda}\in
  {\mathcal H}_{+1}\setminus {\mathcal H}_{+2}$ there exists a unique
  nonsymmetric singular perturbation of rank one, $\tilde
  A\in{\mathcal P}(A)$, such that it has the dual pair
  $(\mu,\lambda)$, where
  $\lambda=\mu+\frac{(\varphi_{\lambda},\psi{\lambda})}{((A-\mu)^{-1}\varphi_{\lambda},\psi_{\lambda})}$,
  as its eigenvalues with the eigenvectors $\varphi_{\lambda}$ and
  $\varphi_{\mu}=(A-\lambda)(A-\mu)^{-1}\varphi_{\lambda}$.  Moreover,
  the operator $\tilde A^* \in {\mathcal P}(A)$ has the same
  eigenvalues but with different eigenvectors, $\psi_{\lambda}$ and
  $\psi_{\mu}=(A-\lambda)(A-\mu)\psi_{\lambda}$.
\end{cor}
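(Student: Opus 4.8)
The plan is to read the statement as the real-coefficient specialization of Theorem~\ref{t7}, so that most of the content is inherited and only two points genuinely need attention: that the hypotheses of Theorem~\ref{t7} are met, and that the membership is sharpened from $\mathcal{P}_{\tau}(A)$ to $\mathcal{P}(A)$. First I would check the spectral hypothesis: since $\sigma(A)=\sigma_c(A)=[0,\infty)$ and $\mu<0$, we have $\mu\in\rho(A)$, hence $(A-\mu)^{-1}$ is bounded and everywhere defined and the quotient defining $\lambda$ makes sense. Because $\varphi_{\lambda},\psi_{\lambda}\in\mathcal{H}_{+1}\setminus\mathcal{H}_{+2}\subset\mathcal{H}\setminus\mathcal{H}_{+2}$, the vector hypotheses of Theorem~\ref{t7} hold as well, and that theorem produces a unique nonsymmetric singular perturbation $\tilde A$ carrying the dual pair $(\mu,\lambda)$ with the stated eigenvectors. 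Restricting to the real regime amounts to taking $\mu\in\mathbb{R}$ (here $\mu<0$) together with data for which the quotient is real, so that $\lambda=\bar\lambda$ and $\psi_{\bar\lambda}=\psi_{\lambda}$; substituting $\bar\mu=\mu$, $\bar\lambda=\lambda$ into the formula of Theorem~\ref{t7} gives exactly $\lambda=\mu+\frac{(\varphi_{\lambda},\psi_{\lambda})}{((A-\mu)^{-1}\varphi_{\lambda},\psi_{\lambda})}$ and turns $\varphi_{\mu}=(A-\lambda)(A-\mu)^{-1}\varphi_{\lambda}$ and $\psi_{\mu}=(A-\lambda)(A-\mu)\psi_{\lambda}$ into the asserted eigenvectors.

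The one genuinely new ingredient is the improvement from $\mathcal{P}_{\tau}(A)$ to $\mathcal{P}(A)$, and here I would argue through the smoothness of the perturbing vectors. By Remark~4 the operator is realized as $\tilde A=A+\alpha\langle\cdot,\omega_1\rangle\omega_2$ with $\omega_2=(A-\lambda)\varphi_{\lambda}-\ldots$ and $\omega_1=(A-\mu)\psi_{\lambda}-\ldots$, the omitted terms being scalar multiples of $\varphi_{\lambda}$ and $\psi_{\lambda}$. Since $\varphi_{\lambda},\psi_{\lambda}\in\mathcal{H}_{+1}=\mathfrak{D}(|A|^{1/2})$ and $A$ maps $\mathcal{H}_{+1}$ boundedly into $\mathcal{H}_{-1}$, each $\omega_i$ lies in $\mathcal{H}_{-1}$ rather than merely in $\mathcal{H}_{-2}$. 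Consequently $\eta_i=A^{-1}\omega_i\in\mathcal{H}_{+1}=\mathfrak{D}(A^{1/2})$, so that $A^{1/2}\eta_1,A^{1/2}\eta_2\in\mathcal{H}$ and the form $(A^{1/2}\eta_2,A^{1/2}\eta_1)$ is a genuine inner product of two vectors of $\mathcal{H}$, hence finite. This is precisely the condition $|(A^{1/2}\eta_2,A^{1/2}\eta_1)|<\infty$ isolated in part~I of Definition~\ref{d1}, i.e.\ $\tilde A\in\mathcal{P}(A)$. The same conclusion is recorded in the inverse-problem dichotomy of Theorem~\ref{t3}, where $\varphi,\psi\in\mathcal{H}_{+1}\setminus\mathcal{H}_{+2}$ forces exactly $\tilde A\in\mathcal{P}(A)$, so I would simply invoke that dichotomy to close this step.

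Finally I would confirm that $\varphi_{\mu}$ and $\psi_{\mu}$ are bona fide singular eigenvectors: writing $\varphi_{\mu}=\varphi_{\lambda}+(\mu-\lambda)(A-\mu)^{-1}\varphi_{\lambda}$ shows that the second summand lies in $\mathcal{H}_{+3}\subset\mathcal{H}_{+2}$ while $\varphi_{\lambda}\in\mathcal{H}_{+1}\setminus\mathcal{H}_{+2}$, so $\varphi_{\mu}\in\mathcal{H}_{+1}\setminus\mathcal{H}_{+2}$ too, and symmetrically for $\psi_{\mu}$. The main obstacle I anticipate is not the algebra, which is inherited verbatim from Theorem~\ref{t7}, but the careful bookkeeping in the scale $\{\mathcal{H}_{k}\}$ that converts the gain of one order of smoothness in $\varphi_{\lambda},\psi_{\lambda}$ into the finiteness of $(A^{1/2}\eta_2,A^{1/2}\eta_1)$; together with the tacit real-pair restriction needed to ensure that the quotient defining $\lambda$ is real, these are the two places where a fully rigorous argument must be spelled out.
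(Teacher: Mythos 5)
Your proposal is correct and takes essentially the same route as the paper: the paper gives no separate proof of Corollary~\ref{c8}, presenting it as the real-case ($\lambda,\mu\in{\mathbb R}$) specialization of Theorem~\ref{t7}, which is exactly your argument. Your extra step — that $\varphi_{\lambda},\psi_{\lambda}\in{\mathcal H}_{+1}\setminus{\mathcal H}_{+2}$ forces $\omega_1,\omega_2\in{\mathcal H}_{-1}$ and hence $\tilde A\in{\mathcal P}(A)$ — is precisely the dichotomy already recorded in the paper's proof sketch of Theorem~\ref{t3}, so it fills in detail rather than diverging from the paper's approach.
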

\section{Approximations properties of perturbed operators}
Different approximations of singularly perturbed selfadjoint operators
are presented in \cite{AKKN}. We have some generalization to the case
of a nonsymmetric rank one perturbation.
\begin{theorem}
  Let $A$ be a semibounded self-adjoint operator defined on
  ${\mathfrak D}(A)$ in a separable Hilbert space ${\mathcal H}$, and
  two vectors $\omega_1,\omega_2\in{\mathcal H}_{-2}$ such that
  $\langle\omega_2,(A-z)^{-1}\omega_1\rangle$ does not exist.  Then
  there exist two sequences $\omega_{1,n}, \omega_{2,n}\in{\mathcal
    H}_{-2}$ converging to $\omega_i$, $i=1,2$, correspondingly, such
  that the sequence of operators $\tilde
  A_n=A+\alpha\langle\cdot,\omega_{1,n}\rangle\omega_{2,n}\in{\mathcal
    P}(A)$ converge to the operator $\tilde
  A=A+\alpha\langle\cdot,\omega_{1}\rangle\omega_{2}\in{\mathcal
    P}_{\tau}(A)$ in the norm resolvent sense if
\[
\lim \limits_{n\to\infty}\langle\omega_{2,n},A(A^2+1)^{-1}\omega_{1,n}\rangle=\tau.
\]
\end{theorem}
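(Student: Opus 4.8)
The plan is to reduce everything to the explicit Krein-type resolvent formula of Theorem \ref{t1} and to obtain norm resolvent convergence from the continuous dependence of its three ingredients $n_z$, $m_z$ and $b_z$ on the data $\omega_1,\omega_2,\tau$. First I would produce the approximating sequences by a spectral regularization $\omega_{i,n}=\theta_n(A)\omega_i$, say $\theta_n(A)=(I+\frac{1}{n}|A|)^{-1/2}$, with $\theta_n\to1$, chosen so that each $\omega_i$ is lifted one step up the $A$-scale, i.e. $\omega_{i,n}\in{\mathcal H}_{-1}\setminus{\mathcal H}$. Because $\theta_n(A)$ commutes with $A$, is a contraction on every space of the scale and tends strongly to $I$, dominated convergence in the spectral representation yields $\omega_{i,n}\to\omega_i$ in ${\mathcal H}_{-2}$. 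For vectors of ${\mathcal H}_{-1}$ the cross quantity $(A^{1/2}\eta_{2,n},A^{1/2}\eta_{1,n})$ with $\eta_{i,n}={\bf A}^{-1}\omega_{i,n}$ is finite, so each $\tilde A_n$ is a genuine singular perturbation of the first type in Definition \ref{d1}, that is $\tilde A_n\in{\mathcal P}(A)$; moreover the number $\tau_n=\langle\omega_{2,n},A(A^2+1)^{-1}\omega_{1,n}\rangle$ that enters its resolvent through (\ref{f13}) is finite, and the standing assumption is that the regularization is tuned so that $\tau_n\to\tau$.

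Next I would apply Theorem \ref{t1} to both families, writing $\tilde R_z^{(n)}=R_z+b_z^{(n)}(\cdot,n_{\bar z}^{(n)})m_z^{(n)}$ and $\tilde R_z=R_z+b_z(\cdot,n_{\bar z})m_z$, where $n_z^{(n)}=R_z\omega_{1,n}$, $m_z^{(n)}=R_z\omega_{2,n}$ and the coupling constants are given by (\ref{f13}). Fix once and for all a point $z\in\rho(A)\cap\rho(\tilde A)$; by the remark following Theorem \ref{t1} the number $b_z$ is then finite and $-b_z^{-1}\neq0$. Since $R_z$ is bounded from ${\mathcal H}_{-2}$ into ${\mathcal H}$, the convergence $\omega_{i,n}\to\omega_i$ immediately gives $n_{\bar z}^{(n)}\to n_{\bar z}$ and $m_z^{(n)}\to m_z$ in the norm of ${\mathcal H}$. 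To pass to the limit in the coupling constant I use that in (\ref{f13}) the operator $(A^2+1)^{-1}$ maps ${\mathcal H}_{-2}$ boundedly into ${\mathcal H}_{+2}$ while $(1+\bar zA)R_{\bar z}$ maps ${\mathcal H}_{-2}$ boundedly into itself, so the pairing $\langle(A^2+1)^{-1}\omega_{2,n},(1+\bar zA)R_{\bar z}\omega_{1,n}\rangle$ is a continuous ${\mathcal H}_{+2}$--${\mathcal H}_{-2}$ duality and converges; combined with $\tau_n\to\tau$ this gives $-({b_z^{(n)}})^{-1}\to-b_z^{-1}\neq0$, hence $z\in\rho(\tilde A_n)$ for large $n$ and $b_z^{(n)}\to b_z$.

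Finally, the difference $\tilde R_z^{(n)}-\tilde R_z$ is a single rank one operator, and its norm is bounded by $|b_z^{(n)}-b_z|\,\|n_{\bar z}^{(n)}\|\,\|m_z^{(n)}\|+|b_z|\big(\|n_{\bar z}^{(n)}-n_{\bar z}\|\,\|m_z^{(n)}\|+\|n_{\bar z}\|\,\|m_z^{(n)}-m_z\|\big)$, which tends to $0$; since $\rho(A)\cap\rho(\tilde A)$ is open and the resolvents stay uniformly bounded near $z$, the first resolvent identity propagates this to every point of $\rho(A)\cap\rho(\tilde A)$, which is norm resolvent convergence. I expect the genuine difficulty to lie not in this continuity argument but in the construction step: because $\langle\omega_2,A(A^2+1)^{-1}\omega_1\rangle$ does not exist, one must choose the regularization so that the regularized pairings $\tau_n$ are summed to the \emph{prescribed} value $\tau$ while every $\omega_{i,n}$ remains outside ${\mathcal H}$ (so that $\tilde A_n\in{\mathcal P}(A)$ is truly singular) and still converges to $\omega_i$ in ${\mathcal H}_{-2}$; the non-uniqueness of this summation is precisely the freedom carried by the parameter $\tau$ distinguishing the operators of ${\mathcal P}_\tau(A)$.
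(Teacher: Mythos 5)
Your second and third paragraphs reproduce, in essence, the first half of the paper's own proof: the paper likewise writes both resolvents through the Krein-type formula of Theorem \ref{t1}, expands the difference of resolvents into rank-one terms, and reduces norm resolvent convergence to the two conditions (\ref{fn1}) and (\ref{fn3}), i.e.\ convergence of $(A-z)^{-1}\omega_{i,n}$ in $\mathcal H$ together with convergence of the regularized pairings, using the same splitting of $(A-\bar z)^{-1}$ into $A(A^2+1)^{-1}$ plus the $\mathcal H_{+2}$--$\mathcal H_{-2}$-continuous remainder $(1+\bar zA)(A-\bar z)^{-1}(A^2+1)^{-1}$. So the continuity part of your argument is sound and is not where your proposal and the paper differ.

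The genuine gap is the construction step, which you replace by a ``standing assumption'' that the regularization ``is tuned so that $\tau_n\to\tau$''. The theorem is an existence statement: for an \emph{arbitrary prescribed} $\tau\in\mathbb C$ one must exhibit sequences $\omega_{i,n}\to\omega_i$ in $\mathcal H_{-2}$ whose pairings converge to that $\tau$; producing them is the real content of the proof. Your one-parameter family $\omega_{i,n}=\theta_n(A)\omega_i$ with $\theta_n(A)=(I+\frac{1}{n}\vert A\vert)^{-1/2}$ cannot do this: since $\theta_n(A)$ commutes with $A$, the numbers $\tau_n=\langle\omega_2,\theta_n(A)^2A(A^2+1)^{-1}\omega_1\rangle$ are completely determined by $\omega_1,\omega_2$ --- there is nothing left to tune --- and because the unregularized pairing does not exist they will in general fail to converge at all; even if they happen to converge, their limit is one fixed number, not the arbitrarily given $\tau$. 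The paper resolves exactly this point with a two-term construction absent from your proposal: it sets $a_n=\langle E_{[0,n]}\omega_2,A(A^2+1)^{-1}\omega_1\rangle$, where $E$ is the spectral measure of $A$, uses the fact that $\omega_i\notin\mathcal H_{-1}$ to find subintervals $[c_n,d_n]\subset[0,n]$ whose pairing mass $b_n=\langle E_{[c_n,d_n]}\omega_2,A(A^2+1)^{-1}\omega_1\rangle$ satisfies $\vert b_n\vert>\vert\tau-a_n\vert$, and then takes $\omega_{i,n}=E_{[0,n]}\omega_i+\varepsilon_{i,n}E_{[c_n,d_n]}\omega_i$ with coefficients $\vert\varepsilon_{i,n}\vert\le 1$ chosen so that the pairing equals $a_n+\varepsilon_{1,n}\varepsilon_{2,n}b_n=\tau$ exactly for every $n$, while still $\Vert\omega_{i,n}-\omega_i\Vert_{-2}\to 0$. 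The divergence of the pairing (equivalently, $\omega_i\notin\mathcal H_{-1}$) is precisely what supplies arbitrarily large correction mass at arbitrarily small $\mathcal H_{-2}$-cost, and any complete proof must exploit this; your closing paragraph correctly identifies the difficulty, but identifying it is not the same as carrying out the construction.
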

\begin{proof}
  Without loss of generality let us assume that $A=A^*\geq 0$.
  Resolvents of the corresponding operators have the following forms:
\begin{multline}
(\tilde A_n-z)^{-1}=(A-z)^{-1} \\ -\frac{1}{\alpha^{-1}+\langle\omega_{2,n},(A-\bar z)^{-1}\omega_{1,n}\rangle}
\langle\cdot,(A-\bar z)^{-1}\omega_{1,n}\rangle(A-z)^{-1}\omega_{2,n};
\end{multline}
\begin{multline}
(\tilde A-z)^{-1}=(A-z)^{-1} \\ -\frac{1}{\alpha^{-1}+\langle\omega_{2},(1+\bar zA)(A-\bar z)^{-1}(A^2+1)^{-1}\omega_{1}\rangle}
\langle\cdot,(A-\bar z)^{-1}\omega_{1}\rangle(A-z)^{-1}\omega_{2}.
\end{multline}
The difference of the resolvents has the form
\begin{equation*}
\begin{split}
(\tilde A_n-&z)^{-1}-(\tilde A-z)^{-1}= \\
&-\frac{1}{\alpha^{-1}+\langle\omega_{2,n},(A-\bar z)^{-1}\omega_{1,n}\rangle}
\langle\cdot,(A-\bar z)^{-1}\omega_{1,n}\rangle(A-z)^{-1}\omega_{2,n}
\\
&+\frac{1}{\alpha^{-1}+\langle\omega_{2},(1+\bar zA)(A-\bar z)^{-1}(A^2+1)^{-1}\omega_{1}\rangle}
\langle\cdot,(A-\bar z)^{-1}\omega_{1}\rangle(A-z)^{-1}\omega_{2}
\\
&=\left[-\frac{1}{\alpha^{-1}+\langle\omega_{2,n},(A-\bar z)^{-1}\omega_{1,n}\rangle}\right.
\\
&+\left.\frac{1}{\alpha^{-1}+\langle\omega_{2},(1+\bar zA)(A-\bar z)^{-1}(A^2+1)^{-1}\omega_{1}\rangle}
\right]
\\
&\times \langle\cdot,(A-\bar z)^{-1}\omega_{1,n}\rangle(A-z)^{-1}\omega_{2,n}+
\\
&+\frac{1}{\alpha^{-1}+\langle\omega_{2},(1+\bar zA)(A-\bar z)^{-1}(A^2+1)^{-1}\omega_{1}\rangle}
\\
&\times \langle\cdot,\left((A-\bar z)^{-1}\omega_{1,n}-(A-\bar z)^{-1}\omega_{1}\right)\rangle(A-z)^{-1}\omega_{2}
\\
&+\frac{1}{\alpha^{-1}+\langle\omega_{2},(1+\bar zA)(A-\bar z)^{-1}(A^2+1)^{-1}\omega_{1}\rangle}
\end{split}
\end{equation*}
\begin{equation*}
\times \langle\cdot,(A-\bar z)^{-1}\omega_{1}\rangle\left( (A-z)^{-1}\omega_{2,n}-(A-z)^{-1}\omega_{2}\right).\qquad\qquad\qquad\qquad
\end{equation*}

To prove that the resolvents of $\tilde A_n$ converge to the resolvent $\tilde A$
with respect to the operator norm it is enough to show that
\begin{equation}\label{fn1}
\Vert (A-z)^{-1}\omega_{i,n}-(A-z)^{-1}\omega_{i}\Vert\longrightarrow 0, \quad i=1,2,
\end{equation}
and
\begin{multline}\label{fn3}
\langle\omega_{2,n},(A-\bar z)^{-1}\omega_{1,n}\rangle
\longrightarrow\langle\omega_{2,n},A(A^2+1)^{-1}\omega_{1,n}\rangle \\ +
\tau+\langle\omega_{2,n},(1+\bar zA)(A-\bar z)^{-1}(A^2+1)^{-1}\omega_{1,n}\rangle,
\end{multline}
as $n\longrightarrow \infty$.
Indeed, if $\omega_{i,n}\longrightarrow\omega_i$, in ${\mathcal H}_{-2}$, $i=1,2$, then
(\ref{fn1}) is valid.

Hence to prove the theorem it is sufficient to show that there exist
sequences of vectors $\omega_{i,n}$, $i=1,2$, with the properties
\[
\omega_{i,n}\longrightarrow \omega_1 \ \text{in} \ {\mathcal H}_{-2}, \ \text{and} \
\lim \limits_{n\to\infty}\langle\omega_{2,n},A(A^2+1)^{-1}\omega_{1,n}\rangle=\tau.
\]
Since the case $\omega_{i,n}\in{\mathcal H}_{-1}$ would be trivial, we
consider the general case $\omega_{i,n}\in{\mathcal H}_{-2}$.  Let us
introduce the real sequence $ a_n=\langle
E_{[0,n]}\omega_2,A(A^2+1)^{-1}\omega_1\rangle, $ where $E$ is the
spectral measure of $A$.  Since $\omega_i\not\in{\mathcal H}_{-1}$,
$i=1,2$, there exists an interval $[c_n,d_n]$ inside $[0,n]$, so that
$ b_n=\langle E_{[c_n,d_n]}\omega_2,A(A^2+1)^{-1}\omega_1\rangle $
could be such that $\vert b_n\vert>\vert\tau-a_n\vert$.  We choose the
sequence $
\omega_{i,n}=E_{[0,n]}\omega_i+\varepsilon_{i,n}E_{[c_n,d_n]}\omega_i,
\ i=1,2.  $ where $\varepsilon_{i,n}$ are taken to be
\[
\varepsilon_{1,n}=\frac{\sqrt{\vert\tau-a_n\vert}}{\sqrt{b_n}}, \quad
\varepsilon_{2,n}=({\rm sgn}(\tau-a_n){\rm sgn} b_n)\varepsilon_{1,n}.
\]
It is obvious that $\vert\varepsilon_{i,n}\vert\leq 1$ and
$[
\langle\omega_{2,n},A(A^2-1)^{-1}\omega_{1,n}\rangle=a_n+\varepsilon_{1,n}\varepsilon_{2,n}b_n=\tau.
$
And we also have that
$
\Vert\omega_{i,n}-\omega_i\Vert_{-2}\leq\Vert E_{[0,\infty]}\omega_i\Vert_{-2}\longrightarrow 0,
$
as $n\longrightarrow\infty$.
Hence $\tilde A_n$ converges to $\tilde A$ in the norm resolvent sense.
\end{proof}
\section{Wave operators}
Assuming that $i\not\in\sigma(\tilde A)$
we define the wave operators for $A$ and $\tilde A\in{\mathcal P}_{\tau}(A)$ in a usual way
\cite{AKu1},
\begin{equation}\label{W+-}
W_{\pm}(A,\tilde A)=s-\lim\limits_{t\rightarrow\pm\infty}U_t
=s-\lim\limits_{t\rightarrow\pm\infty}e^{i\tilde At}e^{-iAt}P^{ac},
\end{equation}
where
\begin{equation*}
P^{ac}f=\int_{-\infty}^{\infty}\hat f^{ac}(\lambda)\,dE^{ac}(\lambda)(A+i)^{-1}\omega_1,
\quad f^{ac}\in{\mathcal H}^{ac},
\end{equation*}
$P^{ac}$ and $\tilde P^{ac}$ denote the spectral projections onto the
absolutely continuous parts $\sigma_{ac}(A)$ and $\sigma_{ac}(\tilde
A)$ of the spectrum of the operators $A$ and $\tilde A$; ${\mathcal
  H}^{ac}$ and $\tilde {\mathcal H}^{ac}$ are the corresponding
subspaces,
\[
{\mathcal H}^{ac}=P^{ac}{\mathcal H}, \
\tilde{\mathcal H}^{ac}=\tilde P^{ac}{\mathcal H};
\]
\[
f=\int_{-\infty}^{\infty}\hat f(\lambda)\,dE(\lambda)g=
\int_{-\infty}^{\infty}\hat {\tilde f}(\lambda)\,d\tilde E(\lambda)\tilde g,
\quad g=(A+i)^{-1}\omega_1, \ \tilde g=(\tilde A+i)^{-1}\omega_2.
\]

This definition of  wave operators is correct due to Theorem
1.5 from \cite{Ka}, namely, $\tilde A$ is the spectral type operator
and what is more, with this connection $\tilde A$ is the scalar type
operator $\sigma_{ac}(A)=\sigma_{ac}(\tilde A)$, and there exist
wave operators for the couple $A$ and $\tilde A$.
\begin{theorem}
  Let, on a separable Hilbert space ${\mathcal H}$, there be given a
  self-adjoint operator $A$ and its nonsymmetric singular perturbation
  of the form
\[
\tilde A=A+\alpha\langle\cdot,\omega_1\rangle\omega_2, \quad
\omega_1, \omega_2\in{\mathcal H}_{-2}\setminus{\mathcal H}, \ \alpha\in{\mathbb C}.
\]

Then there exist the defined in  (\ref{W+-}) wave operators $W_{\pm}$
 in the form:
\begin{equation}\label{W+-f}
W_{\pm}f=\int_{-\infty}^{\infty}(1+\tau+\alpha F(\lambda\pm i0))\hat f^{ac}(\lambda)\,
d\tilde E(\lambda)(\tilde A+i)^{-1}\omega_2,
\end{equation}
where
\[
P^{ac}f=\int_{-\infty}^{\infty}\hat f^{ac}(\lambda)\,dE(\lambda)(A+i)^{-1}\omega_1, \quad
F(z)=\langle(A-z)^{-1}\omega_2,(1+\bar zA)(A^2+I)^{-1}  \omega_1\rangle.
\]
\end{theorem}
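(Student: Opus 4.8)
The existence of $W_\pm$, together with the facts that $\tilde A$ is a scalar type spectral operator and $\sigma_{ac}(A)=\sigma_{ac}(\tilde A)$, is already recorded just before the statement (via Theorem 1.5 of \cite{Ka}), so the remaining task is only to identify the explicit action. The plan is to start from the Duhamel (cocycle) identity for $U_t=e^{i\tilde At}e^{-iAt}P^{ac}$. Differentiating in $t$ and using $\tilde A-A=\alpha\langle\cdot,\omega_1\rangle\omega_2$ gives, for $f\in{\mathcal H}^{ac}$,
\[
W_\pm f=P^{ac}f+i\alpha\int_0^{\pm\infty}\langle e^{-iAt}f,\omega_1\rangle\,e^{i\tilde At}\omega_2\,dt,
\]
where the dual pairing $\langle e^{-iAt}f,\omega_1\rangle$ and the vector $e^{i\tilde At}\omega_2\in{\mathcal H}_{-2}$ are understood through the $A$- and $\tilde A$-spectral representations.

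Next I would pass to those representations, writing $\langle e^{-iAt}f,\omega_1\rangle=\int e^{-i\lambda t}\hat f^{ac}(\lambda)\overline{\hat\omega_1(\lambda)}\,d\rho(\lambda)$ and $e^{i\tilde At}\omega_2=\int e^{i\mu t}\,d\tilde E(\mu)\omega_2$, and carry out the $t$-integration with the Abelian regularization $t\mapsto t\mp i\varepsilon$. The time integral produces the kernel $i(\mu-\lambda\pm i0)^{-1}$, which collapses the $\mu$-integration to a boundary value of the $\tilde A$-resolvent, giving
\[
W_\pm f=P^{ac}f-\alpha\int\hat f^{ac}(\lambda)\overline{\hat\omega_1(\lambda)}\,\tilde R_{\lambda\mp i0}\omega_2\,d\rho(\lambda).
\]
Now I invoke Theorem \ref{t1}: the computation inside its proof yields $\tilde R_z\omega_2=-b_z(A-z)^{-1}\omega_2=-b_z m_z$, so the integrand is again a scalar multiple of $m_{\lambda\mp i0}$. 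Stone's formula for the scalar-type resolution $\tilde E$ then lets me rewrite $\overline{\hat\omega_1(\lambda)}\,\tilde R_{\lambda\mp i0}\omega_2\,d\rho(\lambda)$ as a scalar times $d\tilde E(\lambda)(\tilde A+i)^{-1}\omega_2$; combining this with the re-expression of $P^{ac}f$ in the $\tilde A$-representation merges the two terms into the single integral of the claimed formula.

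It then remains to identify the scalar multiplier. Collecting the factor coming from $\tilde R_{\lambda\mp i0}\omega_2$ (governed by $b_z$ through $(\ref{f13})$, that is, $-b_z^{-1}=\alpha^{-1}+\tau+\langle(A^2+1)^{-1}\omega_2,(1+\bar zA)R_{\bar z}\omega_1\rangle$), the normalization $(\tilde A+i)^{-1}$ of the cyclic vector, and the pairing against $\omega_1$, a direct computation reduces the multiplier to the boundary value $1+\tau+\alpha F(\lambda\pm i0)$ with $F(z)=\langle(A-z)^{-1}\omega_2,(1+\bar zA)(A^2+I)^{-1}\omega_1\rangle$; here the substitution $\tau=\langle\omega_2,A(A^2+1)^{-1}\omega_1\rangle$ used in the proof of Theorem \ref{t1} is precisely what turns the $b_z$-expression into $F$.

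The main obstacle is that every step above is formal until the singular vectors $\omega_1,\omega_2\in{\mathcal H}_{-2}\setminus{\mathcal H}$ are controlled. One must justify: \emph{(i)} the existence of the boundary values $F(\lambda\pm i0)$ and $\tilde R_{\lambda\pm i0}\omega_2$ for $d\rho$-a.e.\ $\lambda$, which is a limiting-absorption statement for the mixed matrix element $F$; \emph{(ii)} the interchange of the $t$-integration with the two spectral integrations, delicate because $\hat\omega_i(\lambda)$ is only $d\rho$-locally integrable; and \emph{(iii)} the use of Stone's formula for the \emph{non-orthogonal} resolution of the identity $\tilde E$ attached to the scalar-type operator $\tilde A$. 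I expect \emph{(i)} and \emph{(iii)} to be the genuinely hard points, since it is the limiting absorption principle together with the correct normalization of the density of $\tilde E$ that pin down both the side $\lambda\pm i0$ of the boundary value and the exact scalar $1+\tau+\alpha F(\lambda\pm i0)$.
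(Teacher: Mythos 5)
Your proposal takes essentially the same route as the paper: the printed proof consists entirely of the remark that the formula follows ``by a direct calculation as in \cite{AKu1} or by substituting the corresponding expressions in the formulas obtained in \cite{VD}'', and your Duhamel--stationary argument, driven by the Krein-type formula (\ref{f13}) of Theorem \ref{t1} and by replacing the divergent part of $\langle\omega_2,R_{\bar z}\omega_1\rangle$ with $\tau$, is exactly that direct calculation carried one level further (the analytic points (i)--(iii) you flag are likewise left unaddressed by the paper, being absorbed into the citations). One small correction: the identity you quote, $\tilde R_z\omega_2=-b_z m_z$, copies a typo in the paper's intermediate display --- solving (\ref{f14}) consistently with the final formulas (\ref{f10}) and (\ref{f13}) gives $\tilde R_z\omega_2=-\alpha^{-1}b_z m_z$, a constant factor you must track when collecting the multiplier $1+\tau+\alpha F(\lambda\pm i0)$.
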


\begin{proof} 
  The proof can be carried out by a direct calculation as in
  \cite{AKu1} or by substituting the corresponding expressions in the
  formulas obtained in \cite{VD} and taking into account that $\tilde
  A\in{\mathcal P}_{\tau}(A)$ instead of $\tilde A\in{\mathcal P}(A)$.
\end{proof}

The adjoint operator has the form
\[
 W_+^*g= \int_{-\infty}^{\infty}\frac{1}{1+\tau+\alpha F(\lambda+i0)}\hat {\tilde g}^{ac}(\lambda)\,dE(A+i)^{-1}\omega_1,
\]
where
\[
 \tilde P^{ac}g= \int_{-\infty}^{\infty}\hat {\tilde g}^{ac}\,d\tilde E(\lambda)(\tilde A+i)^{-1}\omega_2,
\quad \tilde g=(\tilde A+i)^{-1}\omega_2=\frac{1}{1+\tau+F(-i)}g.
\]
Then $S(\tilde A,A)=W_+^*W_-$ is a scattering operator and
\begin{equation}\label{S AA}
 S(\tilde A,A,\lambda,\tau)=\frac {1+\tau+\alpha F(\lambda-i0)}{1+\tau+\alpha F(\lambda+i0)}.
\end{equation}
is a scattering matrix consisting of one (complex) number.
\section{Examples}
{\bf Example 1}. Let us illustrate Theorems \ref{t1}, \ref{t2}, and
\ref{t3}.  Let ${\mathcal H}=L_2([2,\infty),dx)=L_2$ and $A$ be the
operator of multiplication by the independent variable $x^2$, namely
\[
Af(x)=x^2f(x), \quad {\mathfrak D}(A)=\{f(x)\in L_2 \ \vert \ x^2f(x)\in L_2\}.
\]
It is obvious that the operator $A\geq{2}$ and $A$ has purely
absolutely continuous spectrum, i.e.,
$\sigma(A)=\sigma_{ac}(A)=[{2},\infty)$.

Let us put ${\mathcal H}_1=L_2([{2},\infty),x^2dx)$ and ${\mathcal
  H}_{-1}=L_2([{2},\infty),x^{-2}dx)$ be the dual spaces.  In such a
case, ${\mathfrak D}(A)={\mathcal H}_2=L_2([{2},\infty),x^4dx)$, and
${\mathcal H}_{-2}=L_2([{2},\infty),x^{-4}dx)$.

Let us take $\omega_1=\frac{1}{{x-1}}$ and $\omega_2=\frac{1}{{x+1}}$.
It is obvious that $\omega_1,\omega_2\in{\mathcal
  H}_{-1}\setminus{\mathcal H}$.  So we can illustrate the operator
$\tilde A\in{\mathcal P}(A)$ in the form (\ref{f10}).

If we suppose additionally that $\tilde A$ possess a new point of
spectrum $\lambda=0$, i.e., $0\in\sigma_p(\tilde A)$, then due to 
Proposition \ref{p2} we calculate $\alpha=-\frac{2}{1-\ln 3}$ by the
formula (\ref{fnd}), since
\[
\langle A^{-1}\omega_2,\omega_1\rangle=
\int_{2}^{\infty}\frac{dx}{x^2(x^2-1)}=\frac{1-\ln 3}{2}.
\]

Hence,
\[
\tilde Af(x)=xf(x)-\frac{2}{1-\ln 3}\frac{1}{{x+1}}\int_{2}^{\infty}\frac{f(x)}{{x-1}}\,dx.
\]
Thus
\[
\varphi=\frac{1}{x^2(x+1)}, \quad \psi=\frac{1}{x^2(x-1)}.
\]

To illustrate Theorem \ref{t1}, formulas (\ref{f10}) and Theorem
\ref{t3}, formulas (\ref{f23}), (\ref{f24}) (for the case where
$\lambda=0$) we must put
\[
n_z=\frac{1}{(x^2-z)(x-1)}, \quad
m_z=\frac{1}{(x^2-z)(x+1)}, \quad
b_z^{-1}=\frac{2}{1-\ln 3}-\int_{{2}}^{\infty}\frac{dx}{(x^2-z)(x^2-1)}.
\]
It is obvious that $\tilde A\in{\mathcal P}(A)$.

{\bf Example 2}. This example illustrates Theorem \ref{t7}.  Let
${\mathcal H}=L_2({\mathbb R}^1,dx)$ and $A$ be a Laplace operator,
namely $Af(x)=-f''(x)$, ${\mathfrak D}(A)=W_2^2({\mathbb R}^1)$ is the
Sobolev space. The operator $A\geq 0$ has  purely absolutely
continuous spectrum, namely $\sigma(A)=\sigma_c(A)=[0,\infty)$.

We put $\varphi_{\lambda}=e^{-\vert x-1\vert}$ and
$\psi_{\lambda}=e^{-\vert x+1\vert}$, $\mu=-1$ (we consider the case
$\lambda, \mu\in{\mathbb R}$ and $\lambda,\mu\in\rho(A)$).  To
calculate $\lambda$, we need
\[
(\varphi_{\lambda},\psi_{\lambda})=\int_{{\mathbb R}} e^{-\vert x-1\vert}e^{-\vert x+1\vert}\,dx=3e^{-2}.
\]
By using \cite{AGHKH}, we calculate
\begin{equation}\label{fl}
(A+1)^{-1}\varphi_{\lambda}=\int_{{\mathbb R}}\frac{1}{2} e^{-\vert x-\tau\vert}e^{-\vert \tau-1\vert}\,d\tau=
\left\{
\begin{array}{l}
\frac{x}{2}e^{-x+1},  \ x>1, \\ \frac{2-x}{2}e^{x-1}, \ x<1,
\end{array}
\right.
\end{equation}
and
$
((A+1)^{-1}\varphi_{\lambda},\psi_{\lambda})=\frac{13}{4}e^{-2}.
$
Also, $\lambda=-\frac{1}{13}<0$, $\alpha=-\frac{4}{13}e^2$. And from (\ref{eq5}) we have that
\[
\omega_1=\delta_{-1}(x)-\frac{12}{13}e^{-\vert x+1\vert}, \quad \omega_2=\delta_{+1}(x)-\frac{12}{13}e^{-\vert x-1\vert}.
\]
The operator $\tilde A=A+\alpha\langle\cdot,\omega_1\rangle\omega_2\in{\mathcal P}(A)$ is such that
$\tilde A\varphi_{\lambda}=\lambda\varphi_{\lambda}$,
$\tilde A^*\psi_{\lambda}=\lambda\psi_{\lambda}$,
$\tilde A\varphi_{\mu}=\mu\varphi_{\mu}$,
$\tilde A^*\psi_{\mu}=\mu\psi_{\mu}$, and
$\varphi_{\mu}=(A-\lambda)(A-\mu)^{-1}\varphi_{\lambda}=e^{-\vert x+1\vert}$,
$\psi_{\mu}=e^{-\vert x-1\vert}$.

{\bf Example 3}. We illustrate once more Theorem \ref{t1}.  Let
${\mathcal H}=L_2({\mathbb R}^3)$ and $A$ play the role of the Laplace
operator, namely $Af(x)=-\Delta f(x)$, ${\mathfrak
  D}(A)=W_2^2({\mathbb R}^3)$ is the Sobolev space.  The operator is
positive $A\geq 0$ and has purely absolutely continuous spectrum,
i.e., $\sigma(A)=\sigma_c(A)=[0,\infty)$.  Let us consider the
expression (\ref{f4}) that describes the $\delta$-interaction with
retardation, i.e., the formal expression $\tilde
A=-\Delta+\alpha\langle\cdot,\delta_0\rangle\delta_1$, where
$\delta_0$ is a $\delta$-function at the point $0=(0,0,0)\in{\mathbb
  R}^3$ and $\delta_{1}$ is a $\delta$-function at the point ${\bf
  1}=(1,0,0)\in{\mathbb R}^3$.

Using \cite{AGHKH} we can write the resolvent of such operators at the
regular point $i\in{\mathbb C}$ (corresponding to one parameter
family) i.e its integral kernel,
\[
(-\tilde\Delta-i)^{-1}(x,p)=
\frac{e^{-\vert x-p\vert}}{4\pi\vert x-p\vert}+\frac{1}{\alpha^{-1}+(4\pi e)^{-1}}
\frac{e^{-(\vert x\vert+\vert {\bf 1}-p\vert)}}{(4\pi)^2\vert x\vert \vert {\bf 1}-p\vert}
\]
since
$\vert\langle\delta_1,(A-\bar z)^{-1}\delta_0\rangle\vert=\frac{1}{4\pi e}$,
$z=\sqrt{i}$, we take ($\Im \sqrt{i}>0$).
It is obvious that $-\tilde \Delta\in{\mathcal P}(-\Delta)$.

{\bf Example 4}. Let us again illustrate Theorem \ref{t7}.  Let
${\mathcal H}=L_2([1,\infty),dx)$ and $A$ be the multiplication
operator on $x^2$, namely $Af(x)=x^2f(x)$, $f\in{\mathfrak D}(A)$,
where ${\mathfrak D}(A):=\{ f(x)\in L_2 \ | \ x^2f(x)\in L_2 \}$.  It
is obvious that $A\geq 1$ and ${\mathfrak D}(A)=\sigma (A)=[1,\infty
)$.  We put $0=\mu\notin\sigma (A)$ and
$\varphi=\varphi_{\lambda}=x^{-1\frac{1}{3}}$,
$\psi=\psi_{\lambda}=x^{-1\frac{2}{3}}$, $\varphi,\psi\in{\mathcal H}$
but $\varphi,\psi\not\in{\mathcal H}_{+1}$.  In particular ${\mathcal
  H}_{+2}=L_2([1,\infty ),x^4dx)$. Then
\[
(\varphi,\psi)=\int_{1}^{\infty}\frac{dx}{x^3}=\frac{1}{2}, \quad
((A-\mu)^{-1}\varphi,\psi)=\int_{1}^{\infty}\frac{dx}{x^5}=\frac{1}{4}.
\]
Hence, $\lambda={2}\in\sigma(A)$, and
\[
\varphi_{\mu}=(A-{2})A^{-1}\varphi_{\lambda}=\frac{x^2-{2}}{x^{10/3}}, \quad
\psi_{\mu}=(A-{2})A^{-1}\psi_{\lambda}=\frac{x^2-{2}}{x^{11/3}}.
\]
Also from (\ref{f23}) we have that
\[
m_z=(A-\lambda)(A-z)^{-1}\varphi_{\lambda}=\frac{x^2-2}{x^2-z}\frac{1}{x^{4/3}}, \quad
n_z=(A-\bar\lambda)(A-\bar z)^{-1}\psi_{\lambda}=\frac{x^2-2}{x^2-z}\frac{1}{x^{5/3}},
\]
and from  (\ref{f24}) we have $b_z=(\lambda-z)^{-1}(\varphi_{\lambda}, n_{\bar z})^{-1}=
({2}-z)^{-1}(\varphi_{\lambda}, n_{\bar z})^{-1}$,
where
\[
(\varphi_{\lambda}, n_{\bar z})=\int_{1}^{\infty}\frac{1}{x^{4/3}}\frac{x^2-2}{x^2-z}\frac{1}{x^{5/3}}\,dx
=\left(\frac{1}{z^2}-\frac{1}{2z}\right)\ln\sqrt{1-z}+\frac{1}{z}.
\]
Hence (\ref{f88}) has the form
\[
(\tilde A-z)^{-1}=\frac{1}{x^2-z}+b_z\left(\cdot,\frac{x^2-2}{x^2-\bar z}\frac{1}{x^{5/3}}\right)
\frac{x^2-2}{x^2-z}\frac{1}{x^{4/3}}.
\]
Moreover,
\[
\omega_1=\frac{x^2}{x^{5/3}}-\frac{1/2}{1/4}\frac{1}{x^{5/3}}=\frac{x^2-2}{x^{5/3}}, \quad
\omega_2=\frac{x^2}{x^{4/3}}-\frac{1/2}{1/4}\frac{1}{x^{4/3}}=\frac{x^2-2}{x^{4/3}}.
\]
Since $\omega_1,\omega_2\in{\mathcal H}_{-2}\setminus{\mathcal
  H}_{-1}$, $\tilde A\in{\mathcal P}_{\tau}(A)$.

{\bf Example 5}. This example is a modification of the previous one
and illustrates Corollary \ref{c8}.  Let ${\mathcal
  H}=L_2([1,\infty),dx)$ and, as above, $A$ be an operator of
multiplication by $x^2$, namely $Af(x)=x^2f(x)$, $f\in{\mathfrak
  D}(A)$, where ${\mathfrak D}(A):=\{ f(x)\in L_2 \ | \ x^2f(x)\in L_2
\}$.  For simplicity we also put $0=\mu\notin\sigma (A)$, but
$\varphi=\varphi_{\lambda}=x^{-2\frac{1}{3}}$,
$\psi=\psi_{\lambda}=x^{-2\frac{2}{3}}$, $\varphi,\psi\in{\mathcal
  H}_{+1}=L_2([1,\infty ),x^2dx)$.  In particular ${\mathcal
  H}_{+2}=L_2([1,\infty ),x^4dx)$ and $\varphi,\psi\notin {\mathcal
  H}_{+2}$, Then
\[
(\varphi,\psi)=\int_{1}^{\infty}\frac{dx}{x^5}=\frac{1}{4}, \quad
((A-\mu)^{-1}\varphi,\psi)=\int_{1}^{\infty}\frac{dx}{x^7}=\frac{1}{6}.
\]
Hence, $\lambda=3/2\in\sigma(A)$; and
\[
\varphi_{\mu}=(A-3/2)A^{-1}\varphi_{\lambda}=\frac{x^2-3/2}{x^{13/3}}, \quad
\psi_{\mu}=(A-3/2)A^{-1}\psi_{\lambda}=\frac{x^2-3/2}{x^{14/3}}.
\]
And also from (\ref{f23}) we have that
\[
m_z=(A-\lambda)(A-z)^{-1}\varphi_{\lambda}=\frac{x^2-3/2}{x^2-z}\frac{1}{x^{7/3}}, \quad
n_z=(A-\bar\lambda)(A-\bar z)^{-1}\psi_{\lambda}=\frac{x^2-3/2}{x^2-z}\frac{1}{x^{8/3}},
\]
and from  (\ref{f24}) it follows that $b_z=(\lambda-z)^{-1}(\varphi_{\lambda}, n_{\bar z})^{-1}=
(3/2-z)^{-1}(\varphi_{\lambda}, n_{\bar z})^{-1}$,
where
\[
(\varphi_{\lambda}, n_{\bar z})=\int_{1}^{\infty}\frac{1}{x^{7/3}}\frac{x^2-3/2}{x^2-z}\frac{1}{x^{8/3}}\,dx
=\left(\frac{3}{2z^3}-\frac{1}{z^2}\right)\ln\sqrt{1-z}+\left(\frac{3}{4z^2}-\frac{1}{2z}\right) +\frac{3}{8z}.
\]
Hence (\ref{f88}) has the form
\[
(\tilde A-z)^{-1}=\frac{1}{x^2-z}+b_z\left(\cdot,\frac{x^2-3/2}{x^2-\bar z}\frac{1}{x^{8/3}}\right)
\frac{x^2-3/2}{x^2-z}\frac{1}{x^{7/3}}.
\]
Moreover,
\[
\omega_1=\frac{x^2}{x^{8/3}}-\frac{1/4}{1/6}\frac{1}{x^{8/3}}=\frac{x^2-3/2}{x^{8/3}}, \quad
\omega_2=\frac{x^2}{x^{7/3}}-\frac{1/4}{1/6}\frac{1}{x^{7/3}}=\frac{x^2-3/2}{x^{7/3}}.
\]
Since $\omega_1,\omega_2\in{\mathcal H}_{-2}\setminus{\mathcal H}_{-1}$, 
$\tilde A\in{\mathcal P}_{\tau}(A)$, and we can exactly calculate the coupling constant.
\[
\langle\varphi_{\lambda},\omega_1\rangle=\int_{1}^{\infty}\frac{1}{x^{7/3}}\frac{x^2-3/2}{x^{8/3}}\,dx=
\langle\psi_{\lambda}, \omega_2\rangle=\int_{1}^{\infty}\frac{1}{x^{8/3}}\frac{x^2-3/2}{x^{7/3}}\,dx=
1/8.
\]
Due to (\ref{fnd}), (\ref{fnd+}) we have that
$\alpha=-\frac{1}{\langle\varphi_{\lambda},\omega_1\rangle}=-8$.

{\bf Example 6}. This example once more illustrates Theorems \ref{t1},
\ref{t2}, and \ref{t3}.  Let ${\mathcal H}=L_2({\mathbb R}^1)$ and $A$
be also the Laplace operator, namely $Af(x)=-f''(x)$, ${\mathfrak
  D}(A)=W_2^2({\mathbb R}^1)$ is the Sobolev space.  The operator
$A\geq 0$ is positive and has purely absolutely continuous spectrum,
i.e., $\sigma(A)=\sigma_c(A)=[0,\infty)$.  Let us consider the
expression (\ref{f4}) that describes the $\delta$-interaction with
retardation on the real line.

Using \cite{AGHKH} we can write the resolvent of such operators
(corresponding to one parameter family) at a regular point $k^2$,
($\Im k^2>0$), i.e its integral kernel,
\[
(-\tilde\Delta-k^2)^{-1}(x,\xi)=
(i/2k)e^{ik\vert x-\xi\vert}+\alpha(2k)^{-1}(i\alpha+2k)^{-1}e^{ik[\vert x-x_2\vert+\vert x_1-\xi\vert]},
\]
where
\[
{\rm Im}k>0, \quad \alpha\in{\mathbb C}, \quad x,\xi,x_1,x_2 \in {\mathbb R}^1, \quad x_1<x_2.
\]
It is not hard to understand that the essential spectrum is
$\sigma_{ess}(-\tilde\Delta)=\sigma_{ac}(-\Delta)=[0,\infty)$, and,
for the singularly continuous spectrum, we have
$\sigma_{sc}(-\tilde\Delta)=\emptyset$.  Moreover, if ${\Re}\alpha<0$,
then the operators $-\tilde\Delta$ (and $-\tilde\Delta^*$) possess
precisely one negative, simple eigenvalue $\{-\alpha^2/4\}$ (and
$\{-\bar\alpha^2/4\}$) with the corresponding normalized eigenfunction
\[
\varphi=(-\alpha/2)^{1/2}e^{\alpha\vert x-y_1\vert/2}, \quad
\psi=(-\bar\alpha/2)^{1/2}e^{\bar\alpha\vert x-y_2\vert/2}.
\]
It is obvious that $-\tilde \Delta\in{\mathcal P}(-\Delta)$.

{\it Acknowledgments}. The authors are grateful to Professor
L.~P.~Nizhnik for stimulating discussions and providing important
suggestions about an approach of viewing nonsingular perturbations.

\end{document}